\documentclass[reqno]{asl} 

\usepackage{hyperref}

\title{Undecidability in Relevant Logic}

\keywords{Relevant logic, Undecidability, Wang tiling, Semilattice semantics}
\renewcommand{\subjclassname}{\textnormal{2020} Mathematics Subject Classification} 
\subjclass{Primary: 03B47, 03B25; Secondary: 03D10}

\author{Søren Brinck Knudstorp}
\revauthor{Knudstorp, Søren Brinck}

\address{ILLC \& Philosophy\\
University of Amsterdam\\
Amsterdam, the Netherlands}

\email{s.b.knudstorp@uva.nl}

\urladdr{https://knudstorp.github.io/}

\newtheorem{Theorem}{Theorem}[section]
\newtheorem{Proposition}[Theorem]{Proposition}
\newtheorem{Lemma}[Theorem]{Lemma}
\newtheorem{Corollary}[Theorem]{Corollary}

\theoremstyle{definition}
\newtheorem{Definition}[Theorem]{Definition}
\newtheorem{Fact}[Theorem]{Fact}

\newtheorem{Remark}[Theorem]{Remark}
\newtheorem{Example}[Theorem]{Example}

\renewcommand{\qedsymbol}{$\Box$} 

\usepackage{graphicx}

\usepackage{tikz} 
\usetikzlibrary{arrows.meta, positioning}
\usepackage{mathdots} 
\usepackage{enumitem} 
\usepackage{bussproofs} 
\usepackage{multicol} 
\usepackage{float} 

\begin{document}

\begin{abstract}
    We prove undecidability for every positive relevant logic extending the system axiomatized by hypothetical syllogism, prefixing, and suffixing and contained in the logic of the semilattice frame $(\mathcal{P}_{\mathrm{fin}}(\mathbb{N}), \cup, \varnothing)$.

    This settles the longstanding decision problem for the semilattice relevant logic $\mathsf{S}$ in the negative, contrary to prevailing expectations of decidability. 
    It also provides a new proof of Urquhart's undecidability theorem for $\mathsf{R}$, $\mathsf{E}$, and $\mathsf{T}$~\cite{urquhart84:jsl}, now by reduction from the Wang tiling problem for arbitrarily large finite isosceles right triangular regions of the plane.
%
\end{abstract}

\maketitle

\section{Introduction}\label{intro}

Initially presented proof-theoretically, the problem of providing a model theory for relevant logics led Urquhart~\cite{Urquhart72,Urquhart73} to develop semilattice semantics. In the signature $\{\to,\land\}$, the resulting semilattice logic $\mathsf{S}_{\to,\land}$ enjoys the finite model property, is decidable, and coincides with $\mathsf{R}_{\to,\land}$, the implication-conjunction fragment of the relevant logic $\mathsf{R}$. In the presence of disjunction, the logic $ \mathsf{S}=\mathsf{S}_{\to,\land,\lor}$ properly extends its $\mathsf{R}$-counterpart $\mathsf{R}^+=\mathsf{R}_{\to,\land,\lor}$. While $\mathsf{S}$ is known to be recursively enumerable and was axiomatized by Fine~\cite{Fine76} and Charlwood~\cite{Charlwood81}, it has been an enduring open problem whether it is decidable. 


More broadly, decision problems for relevant logics have been studied since the beginnings of the field, with an early mention in Curry and Craig's review~\cite{CurryCraig1953} of Church's weak theory of implication~\cite{Church1951}, which corresponds to the implication fragment $\mathsf{R}_\to$ of $\mathsf{R}$. Kripke~\cite{Kripke1959Entailment} proved $\mathsf{R}_\to$ decidable, and Meyer~\cite{Meyer1966} extended the result to the implication-conjunction fragment $\mathsf{R}_{\to, \land}$. 
Still, the decision problem for $\mathsf{R}^+$ (let alone for $\mathsf{R}$) remained open for another two decades, until 
Urquhart~\cite{urquhart84:jsl} settled the matter. Using a Lipshitz--Hutchinson-style~\cite{Lipshitz74, Hutchinson1973} reduction that passes via a coordinatization theorem of von Neumann~\cite{vonNeumann1960}, he showed that many relevant logics, including $\mathsf{R}^+$, $\mathsf{E}^+$ and $\mathsf{T}^+$, are undecidable. Intriguingly, as noted in~\cite{urquhart84:jsl}, 
$\mathsf{S}$ eluded these techniques, and its decision problem remained unresolved. Eventually, this led experts, including Urquhart~\cite{Urquhart16}, to conjecture that $\mathsf{S}$ is decidable.
Contrary to this conjecture, we show that $\mathsf{S}$ is undecidable. 
In doing so, we prove undecidability for a broad class of relevant logics, among which are $\mathsf{R}^+$, $\mathsf{E}^+$ and $\mathsf{T}^+$ (to the author's knowledge, the proof of~\cite{urquhart84:jsl} remains the only prior undecidability proof for these systems).

The weakest system we prove undecidable is the relevant logic $\mathsf{TWJ}^+$. It extends the logic $\mathsf{TW}^+$ (which, in contrast, is decidable~\cite{Giambrone1985}) by the axiom for hypothetical syllogism
\[
    [(\varphi\to\psi)\land (\psi\to\chi)]\to (\varphi\to\chi).
\]
$\mathsf{TW}^+$, in turn, extends the minimal basic relevant logic $\mathsf{B}^+$ by the axioms for prefixing and suffixing
\[
    (\varphi\to\psi)\to[(\chi\to\varphi)\to(\chi\to\psi)]\quad \text{and}\quad (\varphi\to\psi)\to [(\psi\to\chi)\to(\varphi\to\chi)].
\]
At a more conceptual level, our aim is to better understand the boundary between the solvable and the unsolvable. To this end, our proof identifies these axioms, in combination, as a source of undecidability and explains the mechanism by which they lead to it.

The proof is by reduction from the Wang tiling problem, introduced by Wang~\cite{Wang1961,Wang1963} in the study of the decidability of the $\forall\exists\forall$-fragment of first-order logic and proven undecidable by Berger~\cite{Berger}. A Wang tile is a square with coloured sides and a fixed orientation (no rotation or reflection allowed). Given a finite set of tiles $\mathcal{W}$, the problem asks whether the plane can be covered with translated copies of tiles from $\mathcal{W}$ placed side-by-side so that adjacent tiles have matching colours on shared sides. 

To code the tiling problem, for each finite tile set $\mathcal{W}$, we construct a formula $\psi^{}_{\mathcal{W}}$ and prove that 
\begin{enumerate}[label=(\roman*)]
	\item if $\mathsf{TWJ}^+\nvdash \psi^{}_{\mathcal{W}}$, then $\mathcal{W}$ tiles the plane, and
	\item if $\mathcal{W}$ tiles the plane, then $\mathsf{S}\nvdash \psi^{}_{\mathcal{W}}$.
\end{enumerate}
We shall in fact prove a stronger statement than (ii), but (ii) as stated suffices to conclude that every logic $\mathsf{L}$ with $\mathsf{TWJ}^+\subseteq\mathsf{L}\subseteq \mathsf{S}$ is undecidable. It is worth noting that we do not prove (i) directly. Instead, we show that if $\mathsf{TWJ}^+\nvdash \psi^{}_{\mathcal{W}}$, then $\mathcal{W}$ tiles isosceles right triangular regions of the plane of arbitrarily large finite size, which, by a Kőnig's Lemma argument, implies (i).

An earlier version of this work appeared as the conference paper~\cite{Knudstorp2024}; the present journal version simplifies the proof, generalizes the result, and explains the tiling intuitions in greater depth. The latter is also relevant in view of the apparent breadth of the methods developed here and elsewhere, including in~\cite{Knudstorp2025a, GalatosJipsenKnudstorpRevantha:BIund} where related tiling-based techniques are applied in modal logic and bunched implication logic.

The agenda of the paper is as follows. \S\ref{sec:pre} contains the preliminaries, defining the relevant logics of concern as well as the tiling problem. 
\S\ref{sec:TWJandtiling} clarifies why the axioms of $\mathsf{TWJ}^+$ are a source of undecidability by relating them to the ability to encode tilings of arbitrarily large finite isosceles right triangular regions. \S\ref{sec:tilingformulas} defines the tiling formulas. \S\ref{sec:tilinglemmas} proves the tiling lemmas stated in (i) and (ii) above, and \S\ref{sec:results} collects the consequences.

\section{Preliminaries}\label{sec:pre} In this section, we first present relevant logics axiomatically, then give their Routley--Meyer frame semantics -- which we will use to encode tilings -- and finally define the tiling problem used in the reduction.

For further background on relevant logic, see~\cite{AndersonBelnap75, AndersonBelnapDunn92, DunnRestall02,Standefer26}.

\subsection{Syntax}\label{subsec:syntax}

\begin{Definition}
    The formulas $\varphi\in \mathcal{L}$ of the language $\mathcal{L}$ are given by the BNF-grammar
        \begin{align*}
        \varphi\mathrel{::=} p \mid\varphi\land\varphi\mid\varphi\lor\varphi\mid\varphi\to\varphi,
        \end{align*}
    where $p\in \mathsf{Prop}$ for $\mathsf{Prop}$ a denumerable set of propositional letters. We follow the convention that $\land, \lor$ bind tighter than $\to$.
    
    While this will be our language of interest, as it suffices for undecidability, let us note that in the so-called positive relevant language \textit{fusion} $\varphi\circ \psi$ and the \textit{Ackermann truth constant} $\mathfrak{t}$ are sometimes included.
\end{Definition}

The weakest system for which we will prove undecidability is the logic $\mathsf{TWJ}^+$. 
It can be given a Hilbert-style axiomatization with the following axiom schemes:
\begin{enumerate}
    \item $\varphi\to\varphi$
    \item $\varphi\land \psi\to \varphi$
    \item $\varphi\land \psi\to \psi$
    \item $(\varphi\to\psi)\land (\varphi\to \chi)\to (\varphi\to \psi\land\chi)$
    \item $\varphi\to\varphi\lor \psi$ 
    \item $\psi\to\varphi\lor \psi$
    \item $(\varphi\to\chi)\land (\psi\to\chi)\to (\varphi\lor\psi\to\chi)$
    \item $\varphi\land (\psi\lor\chi)\to (\varphi\land \psi)\lor(\varphi\land \chi)$
    \item $(\varphi\to\psi)\land (\psi\to\chi)\to (\varphi\to\chi)$ \hfill (Hypothetical syllogism)
    \item $(\varphi\to\psi)\to[(\chi\to\varphi)\to(\chi\to\psi)]$ \hfill (Prefixing)
    \item $(\varphi\to\psi)\to [(\psi\to\chi)\to(\varphi\to\chi)]$ \hfill (Suffixing)
\end{enumerate}
And with the rules of modus ponens and adjunction:
\begin{center}

\begin{minipage}{0.45\textwidth}
\begin{prooftree}
\AxiomC{$\varphi$}
\AxiomC{$\varphi \to \psi$}
\RightLabel{\scriptsize modus ponens}
\BinaryInfC{$\psi$}
\end{prooftree}
\end{minipage}
\begin{minipage}{0.45\textwidth}
\begin{prooftree}
\AxiomC{$\varphi$}
\AxiomC{$\psi$}
\RightLabel{\scriptsize adjunction}
\BinaryInfC{$\varphi \land \psi$}
\end{prooftree}
\end{minipage}

\end{center}
The last three axioms, 9-11, were named not for general prominence, but for their essential role in our undecidability argument. Replacing them with their corresponding rules yields the logic $\mathsf{B}^+$, the $\mathcal{L}$-fragment of the relevant logic $\mathsf{B}$.\footnote{To be precise, the rule of hypothetical syllogism is not needed, as it is derivable from axiom 2 and 3 together with modus ponens and the suffixing rule.} 

The relevant logics $\mathsf{C}^+, \mathsf{T}^+, \mathsf{E}^+,$ and $\mathsf{R}^+$ are obtained by cumulatively adding the following axiom schemes:

\begin{enumerate}
    \item[13.] $\varphi\land(\varphi\to\psi)\to\psi$  \hfill (Pseudo-modus ponens)
    \item[14.] $(\varphi\to(\varphi\to\psi))\to(\varphi\to\psi)$  \hfill (Contraction)
    \item[15.] $((\varphi\to\varphi)\to \psi)\to \psi$ and $\Box\varphi\land\Box\psi\to\Box(\varphi\land \psi)$  \hfill (E axioms)\footnote{Here, $\Box$ is an abbreviated operator: $\Box \alpha\mathrel{:=}(\alpha\to\alpha)\to\alpha$. Written without abbreviations, the latter scheme is: $[((\varphi\to\varphi)\to\varphi)\land ((\psi\to\psi)\to\psi)]\to [(\varphi\land \psi\to\varphi\land \psi)\to\varphi\land \psi]$. In general, language reducts of the logic $\mathsf{E}$ can be more perspicuously presented when additional vocabulary is available; for example, in a language with Ackermann's constant $\mathfrak{t}$, this axiom-scheme pair can be replaced with the simpler E axiom $(\mathfrak{t}\to \varphi)\to\varphi$.}
    \item[16.] $\varphi\to((\varphi\to\psi)\to\psi)$ \hfill (Assertion)
\end{enumerate}
That is, 
\begin{alignat*}{2}
    \mathsf{C}^+&=\mathsf{TWJ}^+\oplus \text{Pseudo-modus ponens}, &\qquad \mathsf{T}^+&=\mathsf{C}^+\oplus \text{Contraction},\\
    \mathsf{E}^+&=\mathsf{T}^+\oplus \text{E axioms}, &\qquad \mathsf{R}^+&=\mathsf{E}^+\oplus \text{Assertion},
\end{alignat*}
where we, given a logic $\mathsf{L}$ and a scheme or set of schemes $\Phi$, by the notation `$\mathsf{L}\oplus \Phi$' denote the logic obtained by adding $\Phi$ to $\mathsf{L}$ and closing under the rules of modus ponens and adjunction. 

This presentation is not chosen for minimality (e.g., $\mathsf{R}^+=\mathsf{B}^+\oplus \text{Contraction}\oplus \text{Suffixing}\oplus \text{Assertion}$), but to clarify their structure as a chain of increasingly stronger logics, as we shall further substantiate after introducing their frame semantics.

\subsection{Semantics}
Formulas of the language $\mathcal{L}$ can be interpreted on \textit{Routley--Meyer frames}, defined as follows.

\begin{Definition}\label{def:basicframes}
    A \textit{(Routley--Meyer) frame} for $\mathcal{L}$ is a triple $\mathfrak{F}=(K, R, N)$ where $R\subseteq K\times K\times K$ is a ternary relation, $N\subseteq K$ is a set whose elements are denoted \textit{normal points}, and which is subject to the following conditions, where we write $x\leq y$ for $\exists n\in N(Rnxy)$:
    \begin{itemize}
    \item \makebox[2.7cm][l]{(reflexivity)} $x\leq x$,
    \item \makebox[2.7cm][l]{(transitivity)} if $x\leq y\leq z$, then $x\leq z$,
    \item \makebox[2.7cm][l]{(upset)} if $n\in N$ and $n\leq m$, then $m\in N$,
    \item \makebox[2.7cm][l]{(down-down-up)} 
    if $Rxyz$, $x^-\leq x, y^-\leq y$, and $z\leq z^+$, then $Rx^{-}y^{-}z^{+}$.
\end{itemize}
\end{Definition}

\begin{Remark}\label{rm:operationalperspective}
    From a ternary relation $R\subseteq K\times K\times K$, we can define a binary operation $\cdot_R:K\times K\to \mathcal{P}(K)$ given by
        \[
            x\cdot_R y\mathrel{:=}\{z\mid Rxyz\},
        \]
    which lifts to sets $X,Y\subseteq K$ by defining
    \begin{align*}
        X\cdot_R Y\mathrel{:=}\bigcup_{x\in X, y\in Y}x\cdot_R y, && X\cdot_R y\mathrel{:=}X\cdot_R\{y\},&& x\cdot_R Y\mathrel{:=}\{x\}\cdot_R Y.
    \end{align*}
    Conversely, from a binary operation $\cdot:K\times K\to \mathcal{P}(K)$, we can define a ternary relation $R\mathrel{:=}\{(x,y,z)\mid x\cdot y\ni z\}$. (When we now and later write `$x\cdot y\ni z$', instead of `$z\in x\cdot y$', it is merely a notational choice to aid in parsing it as $Rxyz$, not as $Rzxy$.)
    
    Clearly, these transformations are inverses, hence we may as well define frames as triples $\mathfrak{F}=(K, \cdot, N)$ where $\cdot:K\times K\to \mathcal{P}(K)$ is a binary operation, $N\subseteq K$, and the conditions of Definition~\ref{def:basicframes} are met when we write $x\leq y$ for $N\cdot x\ni y $. 
\end{Remark}
For the purposes of this paper, it will be convenient to work with the binary-operation definition of frames, and we proceed accordingly. Given a frame $\mathfrak{F}=(K, \cdot, N)$, we write $\mathcal{U}(K)$ for the set of $\leq$-\textit{upsets}; that is, subsets $X\subseteq K$ such that  $x\in X$ and $x\leq y$ imply $y\in X$.

\begin{Definition}
    A \textit{model} is a pair $(\mathfrak{F}, V)$ where $\mathfrak{F}=(K, \cdot, N)$ is a frame and $V:\mathsf{Prop}\to \mathcal{U}(K)$ is a function, called a \textit{valuation}. For a model $\mathfrak{M}=(\mathfrak{F}, V)$, we say that $\mathfrak{M}$ is a model \textit{over} the frame $\mathfrak{F}$.
\end{Definition}

\begin{Definition}\label{def:clauses}
    Given a model $\mathfrak{M}=(K, \cdot, N, V)$, a point $x\in K$, and a formula $\varphi\in \mathcal{L}$, we say that $x$ \textit{satisfies} $\varphi$ and write $\mathfrak{M}, x\Vdash \varphi$, or simply $x\Vdash\varphi$, according to the following clauses:
    \begin{align*}
        &\mathfrak{M}, x\Vdash p && \text{iff} && x\in V(p),\\
        &\mathfrak{M}, x\Vdash \varphi\land\psi && \text{iff} && \mathfrak{M}, x\Vdash \varphi \text{ and } \mathfrak{M}, x\Vdash \psi,\\
        &\mathfrak{M}, x\Vdash \varphi\lor\psi && \text{iff} && \mathfrak{M}, x\Vdash \varphi \text{ or } \mathfrak{M}, x\Vdash \psi,\\
        &\mathfrak{M}, x\Vdash \varphi\to\psi && \text{iff} && \text{for all $y,z\in K$: if } \mathfrak{M}, y\Vdash \varphi\text{ and $x\cdot y\ni z$, then } \mathfrak{M}, z\Vdash \psi.
    \end{align*}
    Although only these clauses are needed for the undecidability results, we include the clauses for fusion $\circ$ and the Ackermann constant $\mathfrak{t}$ for completeness:
    \begin{align*}
        &\mathfrak{M}, x\Vdash \mathfrak{t} && \text{iff} && x\in N,\\
        &\mathfrak{M}, x\Vdash \varphi\circ\psi && \text{iff} && \text{there exist $y,z\in K$ s.t. } \mathfrak{M}, y\Vdash \varphi\text{, $\mathfrak{M}, z\Vdash \psi $, and $y\cdot z\ni x$.}
    \end{align*}
    
    A formula $\varphi\in\mathcal{L}$ is \textit{valid in a frame} $\mathfrak{F}=(K, \cdot, N)$, written $\mathfrak{F}\vDash\varphi$, if for every model $\mathfrak{M}=(\mathfrak{F},V)$ over $\mathfrak{F}$ and every normal point $n\in N$, $\mathfrak{M},n\Vdash \varphi$. It is \textit{valid in a class of frames} $\mathcal{C}$, written $\mathcal{C}\vDash\varphi$, if it is valid in every frame $\mathfrak{F}\in\mathcal{C}$. It is \textit{refuted by a frame} $\mathfrak{F}$, written $\mathfrak{F}\nvDash \varphi$, if it is not valid in $\mathfrak{F}$. Relative to a model $\mathfrak{M}=(K, \cdot, N, V)$, we also say that a point $x\in K$ \textit{refutes} $\varphi$ if $\mathfrak{M}, x\nVdash \varphi$, although we only say that $\mathfrak{M}$ itself \textit{refutes} $\varphi$ if there is a normal point $n\in N$ refuting $\varphi$.
\end{Definition}
\begin{Remark}\label{rm:verification}
     As valuations $V:\mathsf{Prop}\to \mathcal{U}(K)$ take only upsets as values, propositional letters are persistent: if $x\Vdash p$ and $x\leq y$, then $y\Vdash p$. An induction readily extends this to arbitrary formulas $\varphi\in\mathcal{L}$: if $x\Vdash \varphi$ and $x\leq y$, then $y\Vdash \varphi$. Persistence along with reflexivity of $\leq$, in turn, imply that for any model $\mathfrak{M}=(K, R, N, V)$ and implication $\varphi\to\psi\in\mathcal{L}$, $\mathfrak{M}$ refutes $\varphi\to\psi$ iff there is some $x\in K$ such that $x\Vdash \varphi$ but $x\nVdash \psi$.
\end{Remark}

Given a class of frames $\mathcal{C}$, we define its logic $\mathrm{Log}(\mathcal{C})$ as the set of formulas valid in $\mathcal{C}$, i.e., $\mathrm{Log}(\mathcal{C})\mathrel{:=}\{\varphi \in\mathcal{L\mid \mathcal{C}\vDash \varphi}\}$. It is a standard completeness result that the logic of the class of all frames is the relevant logic $\mathsf{B}^+$. Likewise is it a standard result that complete semantics for $\mathsf{TWJ}^+$, $\mathsf{C}^+$, $\mathsf{T}^+$, $\mathsf{E}^+$ and $\mathsf{R}^+$ can be obtained through the following correspondences (as the formulas are canonical).

\begin{Proposition}[Correspondence]\label{pr:correspondence}
    Let $\mathfrak{F}=(K,\cdot, N)$ be a frame. Then the following correspondences hold:
    \begin{alignat*}{3}
        \text{\textnormal{Hypothetical syllogism} is valid on $\mathfrak{F}$} &\qquad \text{iff} &\qquad x\cdot y&\subseteq x\cdot (x\cdot y),\\
        \text{\textnormal{Prefixing} is valid on $\mathfrak{F}$} &\qquad \text{iff} &\qquad (x\cdot y)\cdot z&\subseteq x\cdot (y\cdot z),\\
        \text{\textnormal{Suffixing} is valid on $\mathfrak{F}$} &\qquad \text{iff} &\qquad (x\cdot y)\cdot z&\subseteq y\cdot (x\cdot z),\\
        \text{\textnormal{Pseudo-modus ponens} is valid on $\mathfrak{F}$} &\qquad \text{iff} &\qquad x&\in x\cdot x,\\
        \text{\textnormal{Contraction} is valid on $\mathfrak{F}$} &\qquad \text{iff} &\qquad x\cdot y&\subseteq (x\cdot y)\cdot y,\\
        \text{\textnormal{E axiom} is valid on $\mathfrak{F}$} &\qquad \text{iff} &\qquad x&\in x\cdot N,\\
        \text{\textnormal{Assertion} is valid on $\mathfrak{F}$} &\qquad \text{iff} &\qquad x\cdot y&\subseteq y\cdot x.    
    \end{alignat*}
\end{Proposition}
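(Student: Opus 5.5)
Each of the seven equivalences is a standard first-order correspondence, and I would prove them one at a time by the same two-directional scheme. First I fix what the set-level conditions mean. For sets, $X\cdot Y$ is as in Remark~\ref{rm:operationalperspective}, so $x\cdot(x\cdot y)=\bigcup_{w\in x\cdot y}x\cdot w$. A condition such as $(x\cdot y)\cdot z\subseteq x\cdot(y\cdot z)$ then reads: if $x\cdot y\ni w$ and $w\cdot z\ni v$, there is $u$ with $y\cdot z\ni u$ and $x\cdot u\ni v$. In every case I use Remark~\ref{rm:verification} to reduce validity of an implication $\alpha\to\beta$ on $\mathfrak{F}$ to the claim that every point satisfying $\alpha$ satisfies $\beta$. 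This handles the outermost arrow. Each inner arrow is unpacked by the clause for $\to$.

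\emph{Soundness (condition $\Rightarrow$ validity).} This is a routine unfolding. For Prefixing, I assume $x\Vdash\varphi\to\psi$ and must show $x\Vdash(\chi\to\varphi)\to(\chi\to\psi)$. So I take $y\Vdash\chi\to\varphi$, $x\cdot y\ni w$, $z\Vdash\chi$ and $w\cdot z\ni v$. The condition yields $u\in y\cdot z$ with $v\in x\cdot u$. Then $u\Vdash\varphi$, hence $v\Vdash\psi$.

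The other cases run the same way:
\begin{itemize}
\item Suffixing: the point is swapped.
\item Hypothetical syllogism: the witness $w\in x\cdot y$ gets $x\Vdash\psi\to\chi$ applied twice.
\item Pseudo-modus ponens: use $x\in x\cdot x$.
\item Contraction: the witness for $(x\cdot y)\cdot y$.
\item Assertion: commutativity of the witness.
\item E axiom: $x\in x\cdot n$ for some $n\in N$. Every normal point satisfies $\varphi\to\varphi$, because $Rnyz$ means $y\le z$ and persistence applies. For the $\Box$-conjunction scheme, I check that $\Box\alpha$ holds at $x$ iff $x\Vdash\alpha$ up to persistence, so the scheme reduces to conjunction.
\end{itemize}
Down-down-up and persistence are used throughout to move between points related by $\le$.

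\emph{Correspondence (validity $\Rightarrow$ condition).} I argue contrapositively. I take the failing configuration and build a valuation with principal upsets ${\uparrow}a=\{b\mid a\le b\}$, which are upsets by transitivity, so that the instance is refuted.

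Take Prefixing with $w\in x\cdot y$, $v\in w\cdot z$ and no $u\in y\cdot z$ having $v\in x\cdot u$. Set $V(r)={\uparrow}z$, $V(p)=y\cdot z$ and $V(q)=K\setminus\{s\mid s\le v\}$. Then:
\begin{itemize}
\item $V(p)$ is an upset by down-down-up, and so is $V(q)$, by transitivity.
\item $y\Vdash r\to p$ and $w\Vdash r\to q$ fails at $v$.
\item $x\Vdash p\to q$: for $u\in y\cdot z$ and $x\cdot u\ni s$, suppose $s\le v$. Down-down-up then puts $v\in x\cdot u$, which contradicts the assumption.
\end{itemize}
So the point $x$ satisfies $p\to q$ but not $(r\to p)\to(r\to q)$. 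The same recipe, with antecedent letters assigned principal upsets or product sets and the consequent assigned the complement of a principal downset, handles Suffixing, Hypothetical syllogism, Contraction and Assertion. Pseudo-modus ponens uses $V(p)={\uparrow}x$ and $V(q)=x\cdot x$.

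The main obstacle is the direction validity $\Rightarrow$ condition for the E axioms. The conclusion $x\in x\cdot N$ must be extracted from formulas built only from $\to,\land$. Here $\varphi\to\varphi$ is true at $y$ iff $y\cdot z\subseteq{\uparrow}z$ for the relevant $z$. That is not the same as $y\in N$, and in the absence of $\mathfrak{t}$ the scheme $\Box\varphi\land\Box\psi\to\Box(\varphi\land\psi)$ is needed to glue the witnesses together. I would try the valuation $V(p)=K\setminus{\downarrow}x$ and argue that if $x\notin x\cdot N$, then $x\Vdash(p\to p)\to p$ fails. To close the gap between ``$p\to p$'' and ``normal'', I would invoke canonicity as the paper does and cite the standard treatment, e.g.\ Routley--Meyer, for this case.
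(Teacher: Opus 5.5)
Your six non-E correspondences are correct, and they go further than the paper. The paper gives no argument: it calls these results standard and points to completeness-via-canonicity in the literature. Your valuations, built from principal upsets ${\uparrow}a$, product sets such as $y\cdot z$ (upsets by down-down-up) and complements of principal downsets, establish genuine frame-level equivalences, which canonicity alone would not. One small slip: in the soundness argument for Hypothetical syllogism it is $x$ that is used twice, once through $\varphi\to\psi$ to get $w\Vdash\psi$ and once through $\psi\to\chi$. It is not $\psi\to\chi$ applied twice.

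The E case, however, has a genuine gap in both directions.

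\emph{Soundness.} Your claim that, under $x\in x\cdot N$, $\Box\alpha$ holds at $x$ iff $\alpha$ does is false. Only $\Box\alpha\Rightarrow\alpha$ follows, and in fact the box scheme need not be valid. Take $K=\{n,x,y,z,u,v,u',v'\}$, $N=\{n\}$ and $R=\{(n,w,w),(w,n,w)\mid w\in K\}\cup\{(x,y,z),(y,u,v),(y,u',v')\}$. Then $\leq$ is the identity and $x\in x\cdot N$ holds everywhere. Let $V(p)=\{x,u\}$ and $V(q)=\{x,u'\}$. The point $y$ refutes $p\to p$ and $q\to q$ but vacuously satisfies $p\land q\to p\land q$. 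Hence $x\Vdash\Box p\land\Box q$, while $z\in x\cdot y$ shows $x\nVdash\Box(p\land q)$.

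\emph{Validity $\Rightarrow$ condition.} Canonicity cannot close the gap you noticed. It tells you that the canonical frame satisfies $x\in x\cdot N$, not that every frame validating the schemes does, and the latter is false. Take $K=\{n,x,y\}$, $N=\{n\}$ and $R=\{(n,w,w)\mid w\in K\}\cup\{(x,y,x),(y,y,y)\}$. Every point $w$ lies in $w\cdot w'$ for some $w'$ satisfying every $\varphi\to\varphi$. Moreover, $\Box\alpha$ holds at $x$, $y$, $n$ iff $\alpha$ holds throughout $\{x\}$, $\{y\}$, $\{n,y\}$ respectively. So both $\mathfrak{t}$-free E schemes are valid on this frame, yet $x\cdot n=\varnothing$.

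Thus the E line is a correspondence only when read as the $\mathfrak{t}$-axiom $(\mathfrak{t}\to\varphi)\to\varphi$ from the paper's footnote. For that reading your own method finishes it directly. Soundness comes from $x\in x\cdot n$ for some $n\in N$. Conversely, if $x\notin x\cdot N$, take the valuation $V(p)=K\setminus{\downarrow}x$. Then $x\Vdash\mathfrak{t}\to p$: if $m\in N$, $s\in x\cdot m$ and $s\leq x$, down-down-up would give $x\in x\cdot m$. Meanwhile $x\nVdash p$.
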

One may therefore verify the strict inclusions
\[
    \mathsf{TWJ}^+\quad \subsetneq \quad \mathsf{C}^+\quad \subsetneq\quad  \mathsf{T}^+\quad \subsetneq\quad \mathsf{E}^+\quad \subsetneq \quad \mathsf{R}^+.\footnotemark
\]
For\footnotetext{See, e.g.,~\cite{Standefer26} for completeness-via-canonicity proofs.\\
\cite{Standefer26} doesn't include explicit mention of the first inclusion, nor have I found it stated elsewhere, so we prove it here. First, note that, modulo $\mathsf{B}^+\oplus\textnormal{Prefixing}$, Pseudo-modus ponens 
implies Hypothetical syllogism, because then
    \[
        x\cdot y\subseteq (x\cdot x)\cdot y\subseteq x\cdot (x\cdot y).
    \]
In contrast, a frame satisfying the first three frame conditions need not satisfy the fourth, as witnessed by the frame $(\{n,x\}, \cdot, \{n\})$ where $n\cdot n=\{n\}$, $n\cdot x=\{x\}$, and otherwise $y\cdot z=\varnothing$. Consequently, $\mathsf{TWJ}^+ \subsetneq \mathsf{C}^+$. 
} our purposes, we highlight that $\mathsf{TWJ}^+$ is sound and complete for the frames $\mathfrak{F}=(K, \cdot, N)$ satisfying
\begin{alignat*}{2}
        &\text{(h)}  &\qquad x\cdot y&\subseteq x\cdot (x\cdot y), \\
        &\text{(p)}  &\qquad (x\cdot y)\cdot z&\subseteq x\cdot (y\cdot z),\\
        &\text{(s)}  &\qquad (x\cdot y)\cdot z&\subseteq y\cdot (x\cdot z).    
    \end{alignat*}
These three frame conditions -- (h), (p) and (s) -- will be instrumental in encoding the tiling problem: in Lemma~\ref{lm:tiling}, we will show that if an $\mathsf{TWJ}^+$-frame (i.e., a frame satisfying (h), (p) and (s)) refutes a tiling formula $\psi^{}_\mathcal{W}$, then $\mathcal{W}$ tiles the plane. 

Note the weakness of these conditions. For instance, full associativity $(x\cdot y)\cdot z = x\cdot (y\cdot z)$ need not be assumed, only the one-way form (p) $(x\cdot y)\cdot z\subseteq x\cdot (y\cdot z)$. Nor do we require the frame conditions corresponding to Contraction or Pseudo-modus ponens, but only the condition (h).
\\\\
In contrast to the systems set out above, the semilattice logic $\mathsf{S}$ is defined semantically, and so is the logic of set frames $\mathsf{SetFr}$, which also falls within the scope of our undecidability result.

To define the latter, by a \textit{Routley--Meyer set frame}, we mean an $\mathsf{R}^+$-frame $\mathfrak{F}=(K, \cdot, N)$ that additionally satisfies
\begin{align*}
    x\cdot x\subseteq N\cdot x.
\end{align*}
The logic of set frames $\mathsf{SetFr}$ is the set of formulas valid in the class of Routley--Meyer set frames; that is, $\mathsf{SetFr}=\mathrm{Log}(\mathcal{S}et\mathcal{F}r)$, where $\mathcal{S}et\mathcal{F}r$ denotes the class of all such frames. The name is motivated by its role in the setting of \textit{collection frames}, where it was introduced by Restall and Standefer~\cite{RestallStandefer2023} as the logic arising from precisifying an informal notion of `collection' with the mathematical notion of a set.

The former, the semilattice logic $\mathsf{S}$, admits a particularly neat frame semantics, introduced in~\cite{Urquhart72, Urquhart73}:

\begin{Definition}
    A \textit{semilattice frame} for $\mathcal{L}$ is a triple $\mathfrak{F}=(S,\sqcup, 0)$ where $(S, \sqcup)$ is a semilattice with an identity element (a least element) $0  \in S$, i.e., $\sqcup:S\times S\to S$ is a function satisfying:
\begin{alignat*}{2}
  &\bullet\quad \text{Commutativity:} \quad & x \sqcup y &= y \sqcup x, \\
  &\bullet\quad \text{Associativity:} \quad & (x \sqcup y) \sqcup z &= x \sqcup (y \sqcup z), \\
  &\bullet\quad \text{Idempotence:} \quad & x \sqcup x &= x, \\
  &\bullet\quad \text{Identity:} \quad & 0 \sqcup x &= x.
\end{alignat*}
    
    A \textit{semilattice model} is a pair $\mathfrak{M}=(\mathfrak{F}, V)$ where $\mathfrak{F}$ is a semilattice frame and $V:\mathsf{P}\to \mathcal{P}(S)$ a function.

    The semantic clauses coincide with the clauses of Definition~\ref{def:clauses}; in particular, the clause for implication simplifies to
    \begin{align*}
        &\mathfrak{M}, x\Vdash \varphi\to\psi && \text{iff} && \text{for all $y\in S$: if } \mathfrak{M}, y\Vdash \varphi\text{, then } \mathfrak{M}, x\sqcup y\Vdash \psi.
    \end{align*}
    $\mathsf{S}$ is the set of validities over the class of semilattice frames, where a formula $\varphi\in \mathcal{L}$ is \textit{valid on a semilattice frame} $\mathfrak{F}=(S,\sqcup, 0)$ if for all models $\mathfrak{M}=(\mathfrak{F}, V)$ over $\mathfrak{F}$, $\varphi$ is satisfied at $0$, i.e., $\mathfrak{M}, 0\Vdash \varphi$.
\end{Definition}

\begin{Remark}
 Observe that a semilattice model $(S,\sqcup, 0, V)$ is a Routley--Meyer model $(S, \cdot, N, V)$ where 
        $$x\cdot y\ni z \qquad \text{iff}\qquad  x\sqcup y =z$$
    and 
        $$N\mathrel{:=}\{0\}.$$
    It is readily verified that a semilattice frame validates the correspondences of Proposition~\ref{pr:correspondence} along with $x\sqcup x=x= 0\sqcup x$, hence a semilattice frame is in particular a Routley--Meyer set frame and we have $\mathsf{SetFr}\subseteq \mathsf{S}$.\footnote{In fact, the inclusions $\mathsf{R}^+\subseteq \mathsf{SetFr}\subseteq \mathsf{S}$ are both strict. As shown by Standefer \cite{Standefer2022}, the formula $[(p\to q\lor r)\land (q\to r)]\to (p\to r)$ -- first identified by Dunn and Meyer to show that $\mathsf{R}^+\subsetneq \mathsf{S}$ -- witnesses the strictness of the latter inclusion, as it can be seen valid on all semilattice frames, but fails on a set frame. 
    
    The strictness of the former was posed as an open question by Restall and Standefer~\cite{RestallStandefer2023}, and settled by the author in recent unpublished work~\cite{Knudstorp:setframes}, identifying a separating formula $\phi\in \mathsf{S}\setminus \mathsf{SetFr}$. Letting $\circ$ bind stronger than $\land$, the formula is 
    \begin{align*}
        \phi&\mathrel{:=}\big[p\land (q\to r\lor s)\big]\to \big[q\to (\psi_r\lor\psi_s)\big],
    \end{align*}
    where
    \begin{align*}
        \psi_r&\mathrel{:=}r\land p\circ [(r\lor s)\land q\circ r],\\
        \psi_s&\mathrel{:=}s\land p\circ [(r\lor s)\land q\circ s].
    \end{align*}
    }
\end{Remark} 

We conclude this subsection with a semilattice frame of particular interest. For more on the semilattice semantics, see the survey~\cite{Standefer2022}.\footnote{Other recent work on the semilattice semantics includes that of Weiss~\cite{Weiss2019note,Weiss2021characteristic,Weiss21,Weiss2021reinterpretation}. \cite{Weiss2021characteristic} shows that $\mathsf{S}$ is complete for the semilattice frame $(\mathbb{N}, \mathrm{lcm}, 1)$ of natural numbers ordered by divisibility, and~\cite{Weiss2019note} uses the same frame to prove that $\mathsf{S}$ has the variable-sharing property. We will have further occasion to refer to~\cite{Weiss21,Weiss2021reinterpretation} below.}

\begin{Example}\label{ex:finitepower}
    Let $\mathcal{P}_{\mathrm{fin}}(\mathbb{N})$ be the set of all finite subsets of the natural numbers. Then
    \(
       (\mathcal{P}_{\mathrm{fin}}(\mathbb{N}), \cup, \varnothing)
    \)
    is a semilattice frame, where $\cup$ is the binary union operation, and the clause for implication (relative to a valuation) is
    \begin{align*}
        &x\Vdash \varphi\to\psi && \text{iff} && \text{for all $y$: if }y\Vdash \varphi\text{, then } x\cup y\Vdash \psi.
    \end{align*}
    This frame plays a key role in the undecidability proof: in Lemma~\ref{lm:tilingpowerset} we show that for every tile set $\mathcal{W}$, if $\mathcal{W}$ tiles the plane, then $(\mathcal{P}_{\mathrm{fin}}(\mathbb{N}), \cup, \varnothing)$ 
    refutes the corresponding tiling formula $\psi^{}_\mathcal{W}$ (and hence $\mathsf{S}\nvdash \psi^{}_\mathcal{W}$).
\end{Example}

\subsection{The tiling problem}\label{subsec:tilingproblem}
In the remainder of this section, we review the tiling problem used in the undecidability proof. 
\begin{Definition}[Wang tiling]
    A \textit{(Wang) tile} is a quadruple $(t_W, t_E, t_N,t_S)\in \mathbb{N}^4$, thought of as a square with edge colours $t_W, t_E, t_N,t_S$.

    Given a finite collection of tiles $\mathcal{W}\subseteq \mathcal{P}(\mathbb{N}^4)$ and a subset $D\subseteq \mathbb{Z}^2$, a \textit{$\mathcal{W}$-tiling of $D$} is a map 
    \[
        \tau:D\to \mathcal{W}, \qquad D\ni s\mapsto (\tau_W(s), \tau_E(s),\tau_N(s),\tau_S(s))\in \mathcal{W}
    \]
    such that adjacent tiles have matching colours on common sides, i.e., for all $(m,n), (m+1, n), (m, n+1)\in D$:
    \begin{align}
        \tau_E(m,n)=\tau_W(m+1,n)\qquad \text{and}\qquad \tau_N(m,n)=\tau_S(m,n+1).\label{eq:tilingCond}
    \end{align}
    We say that $\mathcal{W}$ \textit{tiles $D$} if there exists a $\mathcal{W}$-tiling of $D$.
\end{Definition}
The tiling problem for $\mathbb{Z}^2$ was introduced by Wang \cite{Wang1961,Wang1963}, and it is a classical result of Berger \cite{Berger} that it is undecidable.
\begin{Theorem}[The tiling problem]
    There is no Turing machine that, given the specifications of an arbitrary finite set of Wang tiles $\mathcal{W}$, decides whether $\mathcal{W}$ tiles $\mathbb{Z}^2$.
\end{Theorem}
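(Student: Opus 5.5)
The plan is to reduce the complement of the halting problem (for Turing machines started on blank tape) to the tiling problem. Concretely, I will build, effectively from a machine $M$, a finite tile set $\mathcal{W}_M$ such that $\mathcal{W}_M$ tiles $\mathbb{Z}^2$ iff $M$ does not halt. Since non-halting is not decidable, neither is tileability.

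\emph{Compactness.} Throughout I will use that $\mathcal{W}$ tiles $\mathbb{Z}^2$ iff it tiles every finite square $[-n,n]^2$. The forward direction is trivial. For the converse, apply Kőnig's Lemma to the finitely branching tree of tilings of $[-n,n]^2$, ordered by restriction: restricting a tiling of a larger square gives a tiling of a smaller one, and there are only finitely many tilings of each square. An infinite branch is then a tiling of $\mathbb{Z}^2$.

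\emph{Seeded version.} First I handle the easier problem in which a designated seed tile must sit at the origin. Take tiles whose rows encode successive configurations of $M$, i.e.\ the space--time diagram. Tape symbols are passed along vertical edges, and head movements are passed along horizontal edges. The seed row writes the initial configuration, and no tile carries a halting state. Then a seeded tiling of the quadrant exists iff $M$ runs forever. The backward direction follows from the compactness argument.

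\emph{Removing the seed.} This is the main obstacle. Without a seed, a tiling can simply avoid ever starting a computation, for instance by a periodic filler. So the tile set itself must force computations to be started everywhere, at unboundedly large scales.

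I would follow Robinson's simplification of Berger's construction. Start from Robinson's aperiodic tile set. Its every tiling contains a hierarchy of nested ``red squares'' of side roughly $2^n$ for every $n$.

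Then superimpose a second layer of colours carrying Turing-machine tiles. Inside each red square of level $n$, a computation of $M$ from blank tape runs on the ``free'' rows and columns, those not crossed by smaller squares. The initial configuration is seeded at the square's lower-left corner, and forbidding the halting state bounds how long $M$ can run within that square. Information is transmitted across the obstructed rows and columns by tiles that simply copy colours through. Each red square of level $n$ contains on the order of $2^{n}$ free rows and columns, so it simulates at least $n$ steps of $M$.

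\emph{Correctness.} There are two directions to check.
\begin{itemize}
\item If $M$ halts after $k$ steps, every tiling contains a red square of level $>k$. The simulation inside it would have to reach a halting tile, which is forbidden, so there is no tiling.
\item If $M$ never halts, I construct a tiling explicitly. Take Robinson's hierarchical tiling, and fill each square's free grid with the correct (finite) initial segment of $M$'s run. Non-free cells get the copy tiles.
\end{itemize}

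The delicate step, where I expect most of the work, is verifying two facts about Robinson's construction: that free rows and columns are sufficiently numerous and are aligned consistently, and that no rogue (non-seeded) computations can appear. That second fact is obtained by making each computation's start configuration locally forced by the corner tile of its red square.
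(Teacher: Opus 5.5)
The paper does not prove this theorem; it cites it as Berger's result. Your outline is exactly the standard Berger--Robinson argument behind that citation: compactness via K\H{o}nig's Lemma, plus a reduction from non-halting that runs blank-tape computations of $M$ on the free grids of the nested red squares of Robinson's simplification of Berger's aperiodic set, with the halting state banned. You rightly leave its technical core to the known properties of Robinson's board; the only slips are cosmetic. Free rows and columns are a vanishing fraction (a red square of side about $4^n$ has only about $2^n$ of them, still unbounded, so nothing breaks). And it is the square's size, not the ban on halting tiles, that bounds how many steps are simulated inside it.
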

We prove undecidability of our logical systems by reduction from the $\mathbb{Z}^2$ tiling problem. The reduction makes essential use of the equivalence between tiling $\mathbb{Z}^2$ and tiling finite isosceles right triangular regions of arbitrarily large size, which we call \textit{octants}. To state the equivalence, for $k\in \mathbb{N}$ we denote the \textit{$k$-quadrant} 
\[
    \mathbb{Q}(k)\mathrel{:=}[0,k]\times [0,k]\mathrel{=}\{(m,n)\mid 0\leq m,n\leq k\}
\]
and the \textit{$k$-octant} 
\[
    \mathbb{O}(k)\mathrel{:=}\{(m,n)\mid 0\leq m\leq  n\leq k\}.
\]
We then have the following.
\begin{Lemma}\label{lm:octanttoplane}
    For any finite collection of tiles $\mathcal{W}$, the following are equivalent:
    \begin{enumerate}
        \item $\mathcal{W}$ tiles $\mathbb{Z}^2$.
        \item $\mathcal{W}$ tiles $\mathbb{N}^2$.
        \item For all $k\in \mathbb{N}$, $\mathcal{W}$ tiles $\mathbb{Q}(k)$.
        \item For all $k\in \mathbb{N}$, $\mathcal{W}$ tiles $\mathbb{O}(k)$.
    \end{enumerate}
\end{Lemma}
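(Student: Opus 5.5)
I will prove the cycle of implications $(1)\Rightarrow(2)\Rightarrow(3)\Rightarrow(4)\Rightarrow(1)$; the first three are easy, and the last carries the real content.

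For $(1)\Rightarrow(2)$, I restrict a $\mathcal{W}$-tiling of $\mathbb{Z}^2$ to $\mathbb{N}^2$. The matching condition~\eqref{eq:tilingCond} only concerns pairs of positions that both lie in the domain, so it survives restriction. The same observation gives $(2)\Rightarrow(3)$, since $\mathbb{Q}(k)\subseteq\mathbb{N}^2$. It also gives $(3)\Rightarrow(4)$, since $\mathbb{O}(k)\subseteq\mathbb{Q}(k)$.

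For $(4)\Rightarrow(1)$ I proceed in two steps.

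\emph{Step 1: large octants contain large squares up to translation.} Since the tiling condition is translation invariant, a tiling of $D+v$ yields a tiling of $D$ via $s\mapsto\tau(s+v)$. Fix $j\in\mathbb{N}$ and take $k=2j$. Consider the square $\{(m,n)\mid 0\le m\le j,\ j\le n\le 2j\}$. Every point in it satisfies $m\le j\le n\le 2j$, so the square lies inside $\mathbb{O}(2j)$. Translating by $(0,-j)$ turns it into $\mathbb{Q}(j)$. Likewise, the square $[-j,j]^2$ is a translate of $\mathbb{Q}(2j)$. Hence (4) gives, for every $j$, a $\mathcal{W}$-tiling of $B_j\mathrel{:=}[-j,j]^2$.

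\emph{Step 2: compactness via K\H{o}nig's Lemma.} I build a finitely branching tree. Its nodes at level $j$ are the $\mathcal{W}$-tilings of $B_j$. The parent of a node is its restriction to $B_{j-1}$, which is again a tiling. The tree is finitely branching because $\mathcal{W}$ and each $B_j$ are finite. By Step 1 every level is nonempty, so the tree is infinite. K\H{o}nig's Lemma then gives an infinite branch $\tau_0\subseteq\tau_1\subseteq\cdots$ of compatible tilings.

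Their union $\tau$ is defined on all of $\mathbb{Z}^2=\bigcup_j B_j$. It is a $\mathcal{W}$-tiling: any two adjacent positions lie together in some $B_j$, where $\tau$ agrees with the tiling $\tau_j$, so they match.

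The only point that needs care is the geometric embedding in Step 1. The octant is not itself a square, so I have to check that it contains squares of unboundedly large side length; the square $[0,j]\times[j,2j]\subseteq\mathbb{O}(2j)$ settles this. The compactness argument in Step 2 is standard.
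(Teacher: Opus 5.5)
Your proof is correct and follows the paper's argument: restriction gives the easy implications, the square $[0,j]\times[j,2j]\subseteq\mathbb{O}(2j)$ translated by $(0,-j)$ gives $\mathbb{Q}(j)$, and K\H{o}nig's Lemma is applied to the tilings of $[-j,j]^2$ ordered by restriction. The only difference is cosmetic: you merge the paper's separate steps $(4)\Rightarrow(3)$ and $(3)\Rightarrow(1)$ into a single $(4)\Rightarrow(1)$.
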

\begin{proof}
    Since $\mathbb{O}(k)\subset \mathbb{Q}(k)\subset \mathbb{N}^2 \subset \mathbb{Z}^2$, we have that (1) implies (2) implies (3) implies (4). 
    
    Further, tiling $\mathbb{O}(2k)$ entails tiling 
    \[
        [0,k]\times [k,2k]=\{(m,n)\mid 0\leq m\leq k\leq n\leq 2k\},
    \] 
    which by translation with $(0,-k)$ entails tiling 
    \[
       [0,k]\times [0,k]=\mathbb{Q}(k),  
    \] hence (4) implies (3). 
    
    From (3), it follows that, for all $k\in \mathbb{N}$, $\mathcal{W}$ tiles the $2k$-quadrant centered at origo 
    \[
        [-k, k]\times [-k, k]\mathrel{=}\{(m,n)\mid -k\leq m,n\leq k\},
    \]
    which by a Kőnig's Lemma argument implies (1): the nodes of the tree are tilings 
    \[
    \tau:[-n,n]\times [-n, n]\to \mathcal{W},
    \]
    and there is an edge from a node $\tau:[-n,n]^2\to \mathcal{W}$ to a node $\tau':[-n-1,n+1]^2\to \mathcal{W}$ just in case $\tau'$ agrees with $\tau$ on $[-n,n]^2$, i.e., $\tau'{\upharpoonright}_{[-n,n]^2}=\tau$.
\end{proof}
To encode the tiling problem, given a finite tile set $\mathcal{W}$, we effectively associate a formula $\psi^{}_{\mathcal{W}}\in\mathcal{L}$ and prove that 
\begin{enumerate}[label=(\roman*)]
	\item if $\mathsf{TWJ}^+\nvdash \psi^{}_{\mathcal{W}}$, then $\mathcal{W}$ tiles $\mathbb{O}(k)$ for all $k\in\mathbb{N}$, and
	\item if $\mathcal{W}$ tiles $\mathbb{Z}^2$, then $(\mathcal{P}_{\mathrm{fin}}(\mathbb{N}), \cup, \varnothing)\nvDash \psi^{}_{\mathcal{W}}$.
\end{enumerate}
Because of Lemma~\ref{lm:octanttoplane}, this establishes the undecidability of every set of formulas $\mathsf{L}\subseteq \mathcal{L}$ in the interval 
\[
    \mathsf{L}\in [\mathsf{TWJ}^+, \mathrm{Log}(\mathcal{P}_{\mathrm{fin}}(\mathbb{N}), \cup, \varnothing)],
\]
where $\mathrm{Log}(\mathcal{P}_{\mathrm{fin}}(\mathbb{N}), \cup, \varnothing)=\{\varphi \in\mathcal{L}\mid (\mathcal{P}_{\mathrm{fin}}(\mathbb{N}), \cup, \varnothing)\vDash \varphi\}$, since for such $\mathsf{L}$, (i) and (ii) imply
\[
    \psi^{}_{\mathcal{W}}\notin \mathsf{L}\qquad \text{if and only if}\qquad \text{$\mathcal{W}$ tiles $\mathbb{Z}^2$.}
\]
In particular, every logic whose $\mathcal{L}$-fragment contains $\mathsf{TWJ}^+$ and is sound for the frame $(\mathcal{P}_{\mathrm{fin}}(\mathbb{N}), \cup, \varnothing)$ is undecidable; this, of course, includes each of $\mathsf{TWJ}^+$, $\mathsf{C}^+$, $\mathsf{T}^+$, $\mathsf{E}^+$, $\mathsf{R}^+$, $\mathsf{S}$, $\mathsf{SetFr}$ (as well as logics with additional vocabulary, like $\mathsf{TWJ}$, $\mathsf{C}$, $\mathsf{T}$, $\mathsf{E}$, $\mathsf{R}$ where negation `$\neg$' is present).

Of the directions (i) and (ii), the former is more delicate but also illuminates why the combination of Hypothetical syllogism, Prefixing and Suffixing (which axiomatize $\mathsf{TWJ}^+$ relative to the decidable $\mathsf{B}^+$) leads to undecidability through the interaction of their respective frame correspondents
\begin{alignat*}{2}
        &\text{(h)}  &\qquad x\cdot y&\subseteq x\cdot (x\cdot y), \\
        &\text{(p)}  &\qquad (x\cdot y)\cdot z&\subseteq x\cdot (y\cdot z),\\
        &\text{(s)}  &\qquad (x\cdot y)\cdot z&\subseteq y\cdot (x\cdot z).    
    \end{alignat*}
Specifically, for $\psi^{}_{\mathcal{W}}$-refuting frames, (h), (p) and (s) allow us to define the octants $\mathbb{O}(k)$ inside these frames, which will be key in the proof of (i). We next outline how this is achieved.

\section{Octants $\mathbb{O}(k)$ in (h)-(p)-(s) models}\label{sec:TWJandtiling} In this section, we explain how the frame conditions (h), (p) and (s) -- correspondents of Hypothetical syllogism, Prefixing and Suffixing -- relate to $\mathbb{O}(k)$ tilings. This serves three purposes: to clarify how these axioms make for undecidability, to provide intuition for our later proof that $\mathsf{TWJ}^+\nvdash \psi^{}_{\mathcal{W}}$ implies that $\mathcal{W}$ tiles $\mathbb{O}(k)$ for all $k\in\mathbb{N}$ (Lemma~\ref{lm:tiling}), and to supply an auxiliary lemma used in that proof.

We begin with the auxiliary lemma. 
\begin{Lemma}\label{lm:infChain}
    Let $(K, \cdot, N)$ be a frame satisfying
    \begin{alignat*}{2}
        &\textnormal{(h)}  &\qquad x\cdot y&\subseteq x\cdot (x\cdot y), \\
        &\textnormal{(p)}  &\qquad (x\cdot y)\cdot z&\subseteq x\cdot (y\cdot z).    
    \end{alignat*}
    If there are elements $g, a_1, b_1, \hdots, a_n, b_n$ in $K$ such that
    \begin{align*}
            g\cdot b_1\ni a_1, \quad a_1\cdot b_2\ni a_2,\quad 
            a_2\cdot b_3\ni a_3,\quad \hdots,\quad a_{n-1}\cdot b_n\ni a_n,
        \end{align*}
    then there are elements $a_1', \hdots, a_n'$ such that 
    \begin{align*}
            g\cdot b_1\ni a_1', \quad a_1'\cdot b_2\ni a_2',\quad 
            a_2'\cdot b_3\ni a_3',\quad \hdots,\quad a_{n-1}'\cdot b_n\ni a_n',
        \end{align*}
    and for all $1\leq i\leq n$:
    \[
        g\cdot a_i'\ni a_i.
    \]
\end{Lemma}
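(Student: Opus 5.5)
The proof goes by induction on $i$, constructing $a_i'$ one at a time so that at each stage $a_{i-1}'\cdot b_i\ni a_i'$ (with $a_0'\mathrel{:=}g$) and $g\cdot a_i'\ni a_i$. Condition (h) handles the base case and condition (p) handles the inductive step. Throughout, we use the lifted operation of Remark~\ref{rm:operationalperspective}: $z\in X\cdot y$ iff there is $x\in X$ with $x\cdot y\ni z$, and similarly for $x\cdot Y$.

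\emph{Base case} ($i=1$). We have $a_1\in g\cdot b_1$. Applying (h) with $x=g$ and $y=b_1$ gives $g\cdot b_1\subseteq g\cdot(g\cdot b_1)$, so $a_1\in g\cdot(g\cdot b_1)$. Unfolding the lifted product, there is some $a_1'\in g\cdot b_1$ with $g\cdot a_1'\ni a_1$. This is exactly the first link $g\cdot b_1\ni a_1'$ of the primed chain, together with $g\cdot a_1'\ni a_1$.

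\emph{Inductive step.} Suppose $a_1',\dots,a_{i-1}'$ have been found with the required chain links and $g\cdot a_{i-1}'\ni a_{i-1}$. Since $a_{i-1}\cdot b_i\ni a_i$ and $a_{i-1}\in g\cdot a_{i-1}'$, we get $a_i\in (g\cdot a_{i-1}')\cdot b_i$. Condition (p) with $x=g$, $y=a_{i-1}'$, $z=b_i$ gives
\[
(g\cdot a_{i-1}')\cdot b_i\subseteq g\cdot(a_{i-1}'\cdot b_i),
\]
so $a_i\in g\cdot(a_{i-1}'\cdot b_i)$. Unfolding again, there is $a_i'\in a_{i-1}'\cdot b_i$ with $g\cdot a_i'\ni a_i$. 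This supplies the next link $a_{i-1}'\cdot b_i\ni a_i'$ and preserves the invariant.

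After $n$ steps we have the full primed chain and $g\cdot a_i'\ni a_i$ for all $1\leq i\leq n$. There is no real obstacle. The one point needing care is that the invariant must be ``$g\cdot a_i'\ni a_i$'' rather than anything involving earlier primed elements, since that is precisely what lets (p) absorb the next factor $b_i$. It also explains why (h) is needed only once, to start the induction by inserting a second copy of $g$.
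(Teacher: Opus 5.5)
Your proof is correct and is the same as the paper's: you apply (h) to $g\cdot b_1\ni a_1$ to obtain $a_1'$, and then apply (p) to $(g\cdot a_{i-1}')\cdot b_i\ni a_i$ to obtain $a_i'$ while maintaining the invariant $g\cdot a_i'\ni a_i$. Nothing is missing.
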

\begin{proof}
    We show the existence of elements $a_1', \hdots, a_n'$ by induction on $1\leq i\leq n$.
    
    For the induction start, from $g\cdot b_1\ni a_1$, it follows by (h) that $g\cdot (g\cdot b_1)\ni a_1$. That is, there is a point $a_1'$ s.t. $g\cdot a_1'\ni a_1$ and $g\cdot b_1\ni a_1'$. 
    
    For the inductive step, assume the induction hypothesis for some $1\leq i<n$. We show that it holds for $i+1$. By the induction hypothesis $(g\cdot a_i')\ni a_i$ and by assumption of the lemma $a_i\cdot b_{i+1}\ni a_{i+1}$. Thus, $(g\cdot a_i')\cdot b_{i+1}\ni a_{i+1}$, so by (p) $g\cdot (a_i'\cdot b_{i+1})\ni a_{i+1}$. That is, there is a point $a_{i+1}'$ such that $g\cdot a_{i+1}'\ni a_{i+1}$ and $a_i'\cdot b_{i+1}\ni a_{i+1}'$, completing the induction.
\end{proof}
\begin{Remark}\label{rm:finitevsinfiteo}
    As an interlude, observe that the points in the consequent of the preceding lemma need not be those of the antecedent, i.e. we can have $a_i'\neq a_i$. Although it may not yet be apparent, this is why we consider tilings of arbitrarily large \textit{finite} octants $\mathbb{O}(k)$ rather than the infinite octant $\{(m,n)\mid 0\leq m\leq n\}$.
\end{Remark}
We do not elaborate on the lemma's use here; instead, we note only that it aids in constructing the \textit{lower staircase} of octants $\mathbb{O}(k)$ within $\psi^{}_{\mathcal{W}}$-refuting (h)-(p)-(s) models, illustrated in Figure~\ref{fig:lowerstaircase} for the case $\mathbb{O}(3)$. In the figure, we depict $a\cdot b\ni c$ as the arrow \tikz[baseline={(a.base)}]{
  \node (a) at (0,0) {$a$};
  \node (c) at (1.5,0) {$c$};
  \draw[->] (a) -- node[midway, below, >=stealth] {$\mathrel{\cdot} b$} (c);
}. We also use vertical arrows, which, strictly speaking, represent the same relation as horizontal arrows, but help visualize the construction as forming a staircase. 

\begin{figure}[H]
\begin{center}
\begin{tikzpicture}[>=stealth, node distance=2cm]
  
  \node (a2) at (6,0) {$g_{0,0}$};
  \node (b2) at (8,0) {$g_{1,0}$};
  \node (c2) at (8,2) {$g_{1,1}$};
  \node (b3) at (10,2) {$g_{2,1}$};     
  \node (c3) at (10,4) {$g_{2,2}$};
  \node (g32) at (12,4) {$g_{3,2}$};     
  \node (g33) at (12,6) {$g_{3,3}$};
  

  \draw[->] (a2) -- node[midway,below]{\footnotesize $\mathrel{\cdot} x_1$} (b2);
  \draw[->] (b2) -- node[midway,right]{\footnotesize $\mathrel{\cdot} y_1$} (c2);
  \draw[->] (c2) -- node[midway,below]{\footnotesize $\mathrel{\cdot} x_2$} (b3);
  \draw[->] (b3) -- node[midway,right]{\footnotesize $\mathrel{\cdot} y_2$} (c3);
  \draw[->] (c3) -- node[midway,below]{\footnotesize $\mathrel{\cdot} x_3$} (g32);
  \draw[->] (g32) -- node[midway,right]{\footnotesize $\mathrel{\cdot} y_3$} (g33);
\end{tikzpicture}
\caption{The lower staircase for $\mathbb{O}(3)$; its construction relies on (h) and (p).}
  \label{fig:lowerstaircase}
\end{center}
\end{figure}
Applying $\text{(s) }   (x\cdot y)\cdot z\subseteq y\cdot (x\cdot z)$, we obtain the \textit{upper staircase} from the lower staircase, as illustrated in Figure~\ref{fig:lowertoupperstaircase} for $\mathbb{O}(3)$. The key observation is that $a\cdot b\ni c$ may be viewed not only as $a$ followed by right-composition with $b$, as suggested by the depiction \tikz[baseline={(a.base)}]{
  \node (a) at (0,0) {$a$};
  \node (c) at (1.5,0) {$c$};
  \draw[->] (a) -- node[midway, below, >=stealth] {$\mathrel{\cdot} b$} (c);
}, but also as $b$ followed by left-composition with $a$, which we instead depict as \tikz[baseline={(a.base)}]{
  \node (b) at (0,0) {$b$};
  \node (c) at (1.5,0) {$c$};
  \draw[->] (b) -- node[midway, below, >=stealth] {$a \mathrel{\cdot}$} (c);
}. Concretely, by applying $(s)$ we obtain upper-staircase points such as $g_{0,1}$ as follows:
\begin{align*}
    g_{0,0}\cdot x_{1}\ni g_{1,0}\text{ and }g_{1,0}\cdot y_{1}\ni g_{1,1}\quad &\Rightarrow\quad (g_{0,0}\cdot x_{1})\cdot y_{1}\ni g_{1,1} \quad \overset{\textnormal{(s)}}{\Rightarrow}\quad x_{1}\cdot(g_{0,0}\cdot y_{1})\ni g_{1,1}\\
    &\Rightarrow\quad \text{there is a witness, denoted $g_{0,1}$, with }\\
    &\;\;\;\;\;\, \quad x_{1}\cdot g_{0,1}\ni g_{1,1}\text{ and }g_{0,0}\cdot y_{1}\ni g_{0,1}.
\end{align*}

\begin{figure}[H]
\begin{center}
\begin{tikzpicture}[>=stealth, node distance=2cm]
  \node (a2) at (6,0) {$g_{0,0}$};
  \node (b2) at (8,0) {$g_{1,0}$};
  \node (c2) at (8,2) {$g_{1,1}$};
  \node (b3) at (10,2) {$g_{2,1}$};     
  \node (c3) at (10,4) {$g_{2,2}$};
  \node (g32) at (12,4) {$g_{3,2}$};     
  \node (g33) at (12,6) {$g_{3,3}$};
  
  \node (d2) at (6,2) {$g_{0,1}$};
  \node (d3) at (8,4) {$g_{1,2}$};
  \node (g23) at (10,6) {$g_{2,3}$}; 
  

  \draw[->] (a2) -- node[midway,below]{\footnotesize $\mathrel{\cdot} x_1$} (b2);
  \draw[->] (b2) -- node[midway,right]{\footnotesize $\mathrel{\cdot} y_1$} (c2);
  \draw[->] (c2) -- node[midway,below]{\footnotesize $\mathrel{\cdot} x_2$} (b3);
  \draw[->] (b3) -- node[midway,right]{\footnotesize $\mathrel{\cdot} y_2$} (c3);
    \draw[->] (c3) -- node[midway,below]{\footnotesize $\mathrel{\cdot} x_3$} (g32);
  \draw[->] (g32) -- node[midway,right]{\footnotesize $\mathrel{\cdot} y_3$} (g33);

  \draw[->, dashed] (a2) -- node[midway,left]{ \footnotesize $\mathrel{\cdot} y_1$ } (d2);
  \draw[->, dashed] (d2) -- node[midway,above]{\footnotesize $x_1\mathrel{\cdot}$} (c2);
    \draw[->, dashed] (d3) -- node[midway,above]{\footnotesize $x_2\mathrel{\cdot}$} (c3);
    \draw[->, dashed] (c2) -- node[midway,left]{ \footnotesize $\mathrel{\cdot} y_2$ } (d3);
    \draw[->, dashed] (g23) -- node[midway,above]{\footnotesize $x_3\mathrel{\cdot}$} (g33);
    \draw[->, dashed] (c3) -- node[midway,left]{ \footnotesize $\mathrel{\cdot} y_3$ } (g23);
    \end{tikzpicture}
\caption{The upper staircase for $\mathbb{O}(3)$, constructed via (s).
}
  \label{fig:lowertoupperstaircase}
\end{center}
\end{figure}
Finally, the octant $\mathbb{O}(3)$ is constructed from the upper staircase by means of $\text{(p) }   (x\cdot y)\cdot z\subseteq x\cdot (y\cdot z)$, as in Figure~\ref{fig:upperstaircasetooctant}.

\begin{figure}[H]
\begin{center}
\begin{tikzpicture}[>=stealth, node distance=2cm]
  \node (a2) at (6,0) {$g_{0,0}$};
  \node (b2) at (8,0) {$g_{1,0}$};
  \node (c2) at (8,2) {$g_{1,1}$};
  \node (b3) at (10,2) {$g_{2,1}$};     
  \node (c3) at (10,4) {$g_{2,2}$};
  \node (g32) at (12,4) {$g_{3,2}$};     
  \node (g33) at (12,6) {$g_{3,3}$};
  
  \node (d2) at (6,2) {$g_{0,1}$};
  \node (d3) at (8,4) {$g_{1,2}$};
    \node (g23) at (10,6) {$g_{2,3}$};

  \node (g02) at (6,4) {$g_{0,2}$};
  \node (g03) at (6,6) {$g_{0,3}$};
  \node (g13) at (8,6) {$g_{1,3}$};
  

  \draw[->] (a2) -- node[midway,below]{\footnotesize $\mathrel{\cdot} x_1$} (b2);
  \draw[->] (b2) -- node[midway,right]{\footnotesize $\mathrel{\cdot} y_1$} (c2);
  \draw[->] (c2) -- node[midway,below]{\footnotesize $\mathrel{\cdot} x_2$} (b3);
  \draw[->] (b3) -- node[midway,right]{\footnotesize $\mathrel{\cdot} y_2$} (c3);
    \draw[->] (c3) -- node[midway,below]{\footnotesize $\mathrel{\cdot} x_3$} (g32);
  \draw[->] (g32) -- node[midway,right]{\footnotesize $\mathrel{\cdot} y_3$} (g33);

    \draw[->] (a2) -- node[midway,left]{ \footnotesize $\mathrel{\cdot} y_1$ } (d2);
  \draw[->] (d2) -- node[midway,above]{\footnotesize $x_1\mathrel{\cdot}$} (c2);
    \draw[->] (d3) -- node[midway,above]{\footnotesize $x_2\mathrel{\cdot}$} (c3);
    \draw[->] (c2) -- node[midway,right]{ \footnotesize $\mathrel{\cdot} y_2$ } (d3);
    \draw[->] (g23) -- node[midway,above]{\footnotesize $x_3\mathrel{\cdot}$} (g33);
    \draw[->] (c3) -- node[midway,right]{ \footnotesize $\mathrel{\cdot} y_3$ } (g23);
  
  \draw[->, dashed] (g02) -- node[midway,above]{\footnotesize $x_1\mathrel{\cdot}$} (d3);
  \draw[->, dashed] (d2) -- node[midway,left]{\footnotesize $\mathrel{\cdot} y_2$} (g02);
  \draw[->, dashed] (g13) -- node[midway,above]{\footnotesize $x_2\mathrel{\cdot} $} (g23);
  \draw[->, dashed] (d3) -- node[midway,right]{\footnotesize $\mathrel{\cdot} y_3$} (g13);
  
  \draw[->, dotted] (g03) -- node[midway,above]{\footnotesize $x_1\mathrel{\cdot}$} (g13);
  \draw[->, dotted] (g02) -- node[midway,left]{\footnotesize $\mathrel{\cdot} y_3$} (g03);
    \end{tikzpicture}
\caption{The octant $\mathbb{O}(3)$, constructed recursively from the upper staircase via (p): first the dashed $\protect\tikz[baseline=-0.5ex]{\protect\draw[->, dashed, >=stealth] (0.5,0) -- (1,0);}$, then the dotted $\protect\tikz[baseline=-0.5ex]{\protect\draw[->, dotted, >=stealth] (0.5,0) -- (1,0);}$.}
  \label{fig:upperstaircasetooctant}
\end{center}
\end{figure}

\begin{Remark}[Sources of (un)decidability]
    This highlights the role of Hypothetical syllogism, Prefixing and Suffixing in driving undecidability. In comparison, recall that omitting Hypothetical syllogism from $\mathsf{TWJ}^+$ gives the \textit{decidable} logic $\mathsf{TW}^+$. 
    This reflects a more general phenomenon: its (Hypothetical syllogism-free) extension $\mathsf{RW}^+=\mathsf{TW}^+\oplus \text{Assertion}$~\cite{Giambrone1985} is likewise decidable, and as are related Hypothetical syllogism-free (usually presented as Contraction-free) variants of $\mathsf{S}$~\cite{GiambroneKron1987}. Similarly, when Hypothetical syllogism is retained but the affixing axioms are omitted, the resulting logic $\mathsf{B}^+\oplus \text{Hypothetical syllogism}$ is decidable~\cite{Brady2003}.

    We should also mention that while the combination of these three axioms drives undecidability modulo the decidable basic relevant logic $\mathsf{B}^+$, further contributions, of course, come from axioms already present in $\mathsf{B}^+$. For example,  omitting distributivity from $\mathsf{R}^+$ yields Lattice-$\mathsf{R}^+$, which is decidable~\cite{Meyer1966} (although it has no primitive recursive decision procedure~\cite{Urquhart1999}
    ).
    
    The focus so far has been on the lower bound of the undecidable interval $[\mathsf{TWJ^+}, \mathrm{Log}(\mathcal{P}_{\mathrm{fin}}(\mathbb{N}), \cup, \varnothing)]$. But instead of leaving out theorems of $\mathsf{TWJ}^+$, decidability may also be brought about by extending with formulas not valid in $(\mathcal{P}_{\mathrm{fin}}(\mathbb{N}), \cup, \varnothing)$. 
    For instance, adding $\varphi\to(\psi \to \varphi)$ to $\mathsf{R}^+$ results in the $\{\land,\lor,\to\}$-fragment of intuitionistic propositional logic, while adding Peirce's law $((\varphi\to\psi)\to \varphi)\to \varphi$ results in the corresponding fragment of classical propositional logic. If, however, we add to $\mathsf{R}$, in the language with negation, another paradox of material implication, namely explosion $\varphi\land\neg \varphi\to \psi$, we obtain the logic $\mathsf{KR}$, which remains undecidable~\cite{urquhart84:jsl} (in fact already in its four-variable fragment, but not in its three-variable fragment~\cite{Urquhart2023}).
    
    Lastly, from a semantical viewpoint, it is of interest to note how decidability can arise through restricting the set of admissible valuations: if semilattice valuations are required to be hereditary (i.e., if $x \in V(p)$, then $x \sqcup y \in V(p)$), one obtains a complete semilattice semantics for the $\{\land,\lor,\to\}$-fragment of intuitionistic propositional logic~\cite{Weiss2021reinterpretation,WeissStandefer2026}. This parallels a point made in~\cite{Knudstorp2025a}.
\end{Remark}

\section{The tiling formulas}\label{sec:tilingformulas} We now define the tiling formulas $\psi^{}_{\mathcal{W}}$ associated with a finite set of tiles $\mathcal{W}$. High-level intuition will be given afterward.

\begin{Definition}[Tiling formulas]   
    Let a finite tile set $\mathcal{W}$ be given. For each tile $t\in \mathcal{W}$, we introduce a propositional letter $\mathtt{t}$, and define formulas for the permissible rightwards and upwards neighbours, respectively,
    \[
        \mathtt{R}(t)\mathrel{:=}\bigvee_{t'\in\mathcal{W},\, t^{}_E\,=\,t'_W}\mathtt{t'},\qquad \mathtt{U}(t)\mathrel{:=}\bigvee_{t'\in\mathcal{W},\, t^{}_N\,=\,{t'_S}}\mathtt{t'}.
    \]
    We further include the propositional letters $\mathtt{x}$, $\mathtt{y}$, $\mathtt{p}_\top$, and for each of the four combinations $i,j\in \{0,1\}$, the letter $\mathtt{m_{i,j}}$. We then abbreviate
    \begin{align*}
        \mathtt{G_{i,j}}&\mathrel{:=} \mathtt{m_{i,j}}\land \bigvee_{t\in\mathcal{W}}\left[\mathtt{t}\land [\mathtt{y}\to(\mathtt{m_{i,j}}\lor (\mathtt{m_{i,1-j}} \land \mathtt{U}(t)))]\right]\\
        \mathtt{x'}&\mathrel{:=}\mathtt{x}\land \mathtt{p_\top}\land (\mathtt{p_\top}\to \mathtt{p_\top})\land \bigwedge_{\substack{t\in \mathcal{W}, \\ i,j\in \{0,1\}}}\left[(\mathtt{m_{i,j}}\land \mathtt{t})\to (\mathtt{m_{i,j}}\lor (\mathtt{m_{1-i,j}}\land \mathtt{R}(t)))\right]\\
        &\;\;\;\;\;\,\land \bigwedge_{(i,j)\neq (i',j')}\left[(\mathtt{m_{i,j}}\land \mathtt{m_{i',j'}})\to (\mathtt{m_{0,0}}\land \mathtt{m_{0,1}})\right]\land \bigwedge_{\substack{t,t'\in \mathcal{W}, \\ t\neq t'}}\left[(\mathtt{t}\land \mathtt{t}')\to (\mathtt{m_{0,0}}\land \mathtt{m_{0,1}})\right]\\ \mathtt{y'}&\mathrel{:=}\mathtt{y}\land \mathtt{p_\top}\land (\mathtt{p_\top}\to \mathtt{p_\top})\\
        \mathtt{m_{i,j}^c}&\mathrel{:=} \mathtt{m_{i,1-j}}\lor \mathtt{m_{1-i,j}}\lor \mathtt{m_{1-i,1-j}}\qquad\qquad\qquad\qquad\qquad\, \text{(a notational complement)}\\
        \alpha&\mathrel{:=}\bigwedge_{i,j\in\{0,1\}}\left(\mathtt{m_{i,j}^c}\to \mathtt{m_{i,j}^c}\right)\\
        \beta_1&\mathrel{:=} \bigwedge_{i\in\{0,1\}}\left[(\mathtt{y'}\to \mathtt{m_{i,i}^c})\to \mathtt{m_{i,1-i}^c}\right]\\ 
        \beta_2&\mathrel{:=} \bigwedge_{i\in\{0,1\}}\left[(\mathtt{x'}\to \mathtt{m_{1-i,i}^c})\to \mathtt{m_{i,i}^c}\right]\\
        \gamma&\mathrel{:=}\mathtt{p_\top}\to \bigvee_{i,j\in \{0,1\}}\mathtt{G_{i,j}}.
    \end{align*}    
Finally, we let
\[
    \psi^{}_\mathcal{W} \mathrel{:=} \left(\alpha \land\beta_1\land \beta_2\land  \gamma\land \mathtt{G_{0,0}}\right)\to (\mathtt{x'}\to \mathtt{m_{1,0}^c}).\footnotemark
\]
\end{Definition}
With\footnotetext{The conjunct `$\mathtt{G_{0,0}}$' is actually not needed. It is included as it slightly simplifies a part of the undecidability proof.} the tiling formulas defined, we proceed to clarify their underlying intuition, meant only as a guide -- detailed arguments are left to the proofs. 

The propositional letters $\mathtt{t}$ for $t\in\mathcal{W}$, as well as $\mathtt{R}(t)$ and $\mathtt{U}(t)$, are self-explanatory. For the remaining propositional letters, we offer the following intuition:
    \begin{itemize}
        \item $\mathtt{m_{i,j}}$ records the parity of the $x$- and $y$-coordinate of elements that represent grid points $(m,n)$: $i$ for the parity of $m$, and $j$ for the parity of $n$.
        \item $\mathtt{x}$ and $\mathtt{y}$ mark elements that can effect unit increments in the $x$- and $y$-coordinate, respectively, analogous to the generators $(1,0)$ and $(0,1)$. 
        \item $\mathtt{p}_\top$ acts as `verum' and is solely used to make elements satisfy the grid formulas $\mathtt{G_{i,j}}$ (see $\gamma$).
    \end{itemize}
Regarding the design of the tiling formulas
\[
    \psi^{}_\mathcal{W} \mathrel{=} \left(\alpha \land\beta_1\land \beta_2\land  \gamma\land \mathtt{G_{0,0}}\right)\to (\mathtt{x'}\to \mathtt{m_{1,0}^c}),
\]
note that, if such formula is refuted in a model, there must be an element, to be denoted $g_{0,0}$, that satisfies the antecedent while refuting the consequent (cf. Remark~\ref{rm:verification}). The refutation of the consequent initiates the construction of the lower staircases for the octants $\mathbb{O}(k)$. 
The satisfaction of the antecedent, in turn, guarantees (a) the continuation of the construction of the lower staircases -- which, as outlined earlier, generate the octants $\mathbb{O}(k)$ -- and (b) the assignment of tiles to octant points in accordance with the tiling conditions~\eqref{eq:tilingCond}. 

(a) is ensured by the formulas $\alpha$ and $\beta_i$, while (b) is ensured partly by $\gamma$ and the grid formulas $\mathtt{G_{i,j}}$, and partly by $\mathtt{x'}$. $\beta_i$ function much like the consequent of $\psi^{}_{\mathcal{W}}$ in continuing the lower-staircase construction, whereas $\alpha$ is included for technical reasons related to Lemma~\ref{lm:infChain} and the subsequent Remark~\ref{rm:finitevsinfiteo}. $\gamma$ ensures that elements satisfy a grid formula 
\[
    \mathtt{G_{i,j}}\mathrel{=} \mathtt{m_{i,j}}\land \bigvee_{t\in\mathcal{W}}\left[\mathtt{t}\land [\mathtt{y}\to(\mathtt{m_{i,j}}\lor (\mathtt{m_{i,1-j}} \land \mathtt{U}(t)))]\right].
\]
The grid formulas record the parity of the $x$- and $y$-coordinate [via $\mathtt{m_{i,j}}$], assign a tile [via $\mathtt{t}$ for some $t\in \mathcal{W}$] and require that $y$-increments [expressed by the antecedent $\mathtt{y}\to\cdots$]  result in a point with appropriate parity and matching tile [specified by the second disjunct in the consequent $\cdots\to(\mathtt{m_{i,j}}\lor (\mathtt{m_{i,1-j}} \land \mathtt{U}(t))$]. 

The first disjunct in the consequent strengthens the tiling formula $\psi^{}_\mathcal{W}$ so that whenever $\mathcal{W}$ tiles $\mathbb{Z}^2$, the formula can be refuted in $(\mathcal{P}_{\mathrm{fin}}(\mathbb{N}), \cup, \varnothing)$; it is not needed for the converse direction. This also illustrates a balance in the design of $\psi^{}_\mathcal{W}$: it must be weak enough to extract a tiling from its refutation, yet strong enough to be refutable whenever a tiling exists.

While grid formulas constrain $y$-increments, they do not impose an analogous constraint on $x$-increments. Although the tiling conditions are symmetric, they are encoded in an asymmetric setup (as $\to$ takes arguments on the right, with no corresponding connective taking arguments on the left);\footnote{See~\cite{Knudstorp2025a} and~\cite{GalatosJipsenKnudstorpRevantha:BIund} for settings that draw on similar tiling-based undecidability technology but admit a more straightforward symmetric encoding.} the horizontal constraint is therefore coded separately in $\mathtt{x'}$, in its fourth conjunct. Its second and third conjuncts, $\mathtt{p_\top}\land (\mathtt{p_\top}\to \mathtt{p_\top})$, propagate the satisfaction of $\mathtt{p_\top}$, while its fifth and sixth conjuncts ensure that octant points have a unique parity [i.e. satisfy at most one $\mathtt{m_{i,j}}$] and are assigned a unique tile [i.e. satisfy at most one $\mathtt{t}$].

The latter uniqueness conditions will follow inductively via `modus tollens': in general, we shall repeatedly use that if $a\Vdash \varphi\to\psi$, $a\cdot b \ni c$ and $c\nVdash \psi$, then, by \textit{modus tollens}, $b\nVdash \varphi$.

\section{The tiling lemmas}\label{sec:tilinglemmas} 
Since the construction of $\psi^{}_\mathcal{W}$ from $\mathcal{W}$ is computable, the undecidability proof comes down to the two tiling lemmas, stated earlier in \S\ref{subsec:tilingproblem} as (i) and (ii). We present each lemma together with its proof in separate subsections.

\subsection{The first tiling lemma}
We begin with the first of the two tiling lemmas.
\begin{Lemma}\label{lm:tiling}
    If $\mathsf{TWJ}^+\nvdash\psi^{}_\mathcal{W}$, then $\mathcal{W}$ tiles $\mathbb{O}(k)$ for all $k\in \mathbb{N}$.
\end{Lemma}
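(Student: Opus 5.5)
By completeness of $\mathsf{TWJ}^+$ with respect to frames satisfying (h), (p) and (s), the hypothesis $\mathsf{TWJ}^+\nvdash\psi^{}_\mathcal{W}$ gives such a frame $(K,\cdot,N)$ and a model over it refuting $\psi^{}_\mathcal{W}$. By Remark~\ref{rm:verification} there is a point $g_{0,0}$ with
\[
    g_{0,0}\Vdash \alpha\land\beta_1\land\beta_2\land\gamma\land\mathtt{G_{0,0}}
    \qquad\text{and}\qquad
    g_{0,0}\nVdash \mathtt{x'}\to\mathtt{m^c_{1,0}}.
\]
So there are $x_1\Vdash\mathtt{x'}$ and $g_{1,0}$ with $g_{0,0}\cdot x_1\ni g_{1,0}$ and $g_{1,0}\nVdash\mathtt{m^c_{1,0}}$. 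Fix $k$. The plan is to build the lower staircase of Figure~\ref{fig:lowerstaircase} up to $g_{k,k}$, then the octant as in Figures~\ref{fig:lowertoupperstaircase}--\ref{fig:upperstaircasetooctant}, maintaining two invariants: each point $g_{m,n}$ refutes $\mathtt{m^c_{i,j}}$ for $(i,j)$ the parities of $(m,n)$, and each $g_{m,n}$ satisfies $\mathtt{p_\top}$.

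\emph{Lower staircase.} The difficulty is that the $\beta$'s hold at $g_{0,0}$, not at the later staircase points. Lemma~\ref{lm:infChain} resolves this by always keeping $g_{0,0}$ as the left factor. From $g_{0,0}\cdot x_1\ni g_{1,0}$, (h) yields $a_1'$ with $g_{0,0}\cdot x_1\ni a_1'$ and $g_{0,0}\cdot a_1'\ni g_{1,0}$. Since $g_{0,0}\Vdash\beta_1$ (case $i=1$) and $g_{1,0}\nVdash\mathtt{m^c_{1,0}}$, modus tollens gives $a_1'\nVdash \mathtt{y'}\to\mathtt{m^c_{1,1}}$. Hence there are $y_1\Vdash\mathtt{y'}$ and $g_{1,1}$ with $a_1'\cdot y_1\ni g_{1,1}$ and $g_{1,1}\nVdash\mathtt{m^c_{1,1}}$.

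Alternating with $\beta_2$ continues this: an $\mathtt{x'}$-step out of $\mathtt{m^c_{i,i}}$-refuting points, a $\mathtt{y'}$-step out of $\mathtt{m^c_{i,1-i}}$-refuting points. Lemma~\ref{lm:infChain}, applied to the growing chain, re-expresses each new point as $g_{0,0}\cdot a'$. The conjunct $\alpha$, i.e.\ $\mathtt{m^c}\to\mathtt{m^c}$ at $g_{0,0}$, transfers the refutation of $\mathtt{m^c_{i,j}}$ from $g_{0,0}\cdot a'$ back to $a'$, which is what lets $\beta$ be applied at the next stage. As Remark~\ref{rm:finitevsinfiteo} warns, the primed points change from stage to stage, so this is done separately for each fixed $k$, producing $x_1,y_1,\dots,x_k,y_k$ and a staircase from $g_{0,0}$ to $g_{k,k}$.

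\emph{Octant.} Applying (s) gives the upper staircase, and repeatedly applying (p) fills in the remaining points $g_{m,n}$ with $0\le m\le n\le k$. Each arrow of the octant is either $g_{m,n}\cdot y_{n+1}\ni g_{m,n+1}$ or $x_{m+1}\cdot g_{m,n+1}\ni g_{m+1,n+1}$. Refutation of the appropriate $\mathtt{m^c_{i,j}}$ propagates along these arrows by modus tollens. Along $\cdot y$ arrows use $y\Vdash\mathtt{y'}$; along $x\cdot$ arrows use the conjunct $(\mathtt{m_{i,j}}\land\mathtt{t})\to(\mathtt{m_{i,j}}\lor\ldots)$ of $\mathtt{x'}$, which forces parity changes. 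Satisfaction of $\mathtt{p_\top}$ propagates via $\mathtt{p_\top}\land(\mathtt{p_\top}\to\mathtt{p_\top})$ in $\mathtt{x'},\mathtt{y'}$.

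Since $g_{0,0}\Vdash\gamma$, each point then satisfies some $\mathtt{G_{i,j}}$. Because it refutes $\mathtt{m^c_{i,j}}$ for the correct parities, the disjunct must be the right one, so it satisfies $\mathtt{m_{i,j}}$ and some tile letter $\mathtt{t}$. Set $\tau(m,n):=t$. The uniqueness conjuncts of $\mathtt{x'}$, again via modus tollens (a point satisfying two letters would satisfy $\mathtt{m_{0,0}}\land\mathtt{m_{0,1}}$ and hence a forbidden $\mathtt{m^c}$ at some neighbour), make $\tau$ well defined.

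\emph{Matching conditions.} For vertical neighbours, $g_{m,n}\Vdash \mathtt{y}\to(\mathtt{m_{i,j}}\lor(\mathtt{m_{i,1-j}}\land\mathtt{U}(t)))$ and $g_{m,n}\cdot y_{n+1}\ni g_{m,n+1}$. Since $g_{m,n+1}$ does not satisfy $\mathtt{m_{i,j}}$, it satisfies $\mathtt{U}(t)$, which gives $\tau_N(m,n)=\tau_S(m,n+1)$. For horizontal neighbours the constraint sits in $x_{m+1}\Vdash\mathtt{x'}$, but $x_{m+1}$ is the left factor. So I would first use (p)/(s) to obtain an arrow $x_{m+1}\cdot g'\ni g_{m+1,n}$ with $g'\Vdash\mathtt{m_{i,j}}\land\mathtt{t}$ for $t=\tau(m,n)$. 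Then $g_{m+1,n}\Vdash \mathtt{R}(t)$, and uniqueness of its tile yields $\tau_E(m,n)=\tau_W(m+1,n)$.

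\emph{Main obstacle.} I expect the hardest step to be the bookkeeping in the lower-staircase induction: combining Lemma~\ref{lm:infChain}, $\alpha$ and the $\beta_i$ so that the $\mathtt{m^c}$-refutations survive the passage to primed points. Closely tied to this is checking that every octant point really satisfies $\mathtt{p_\top}$, so that $\gamma$ can be applied.
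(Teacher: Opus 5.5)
Your Parts I--III (the lower staircase via Lemma~\ref{lm:infChain}, $\alpha$ and the $\beta_i$, followed by (s) and (p)) match the paper. Part IV, however, has genuine gaps.

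\textbf{The use of $\gamma$.} The step ``each $g_{m,n}\Vdash\mathtt{p_\top}$, hence by $\gamma$ each point satisfies some $\mathtt{G_{i,j}}$'' is a non sequitur. The formula $\gamma=\mathtt{p_\top}\to\bigvee\mathtt{G_{i,j}}$ holds at $g_{0,0}$, so it only tells you about points $w$ with $g_{0,0}\cdot z\ni w$ for some $z\Vdash\mathtt{p_\top}$. Nothing makes $g_{0,0}$ act as an identity on $g_{m,n}$. Your invariant would not even propagate: $g_{0,0}$ need not satisfy $\mathtt{p_\top}$, and along the right multiplications $g\cdot x_l$, $g\cdot y_l$ the left factor is a grid point, which need not satisfy $\mathtt{p_\top}\to\mathtt{p_\top}$.

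The missing idea is re-bracketing. The route from $g_{0,0}$ up the lower staircase to $g_{m,m}$, and then through $\cdot y_{m+1},\dots,\cdot y_n$, consists solely of right multiplications. Repeated (p) therefore yields $g_{0,0}\cdot z\ni g_{m,n}$, with $z$ in the right-nested product $x_1\cdot(y_1\cdot(\cdots))$ of the labels along that route. In such a product every left factor satisfies $\mathtt{p_\top}\to\mathtt{p_\top}$ and the innermost right factor satisfies $\mathtt{p_\top}$. Hence $z\Vdash\mathtt{p_\top}$ and $\gamma$ applies; this is what the conjuncts $\mathtt{p_\top}\land(\mathtt{p_\top}\to\mathtt{p_\top})$ of $\mathtt{x'}$ and $\mathtt{y'}$ are for.

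\textbf{The parity propagation.} Parity cannot be propagated along single arrows, and it cannot be done before $\gamma$ has been applied.
Along $g_{m,n}\cdot y_{n+1}\ni g_{m,n+1}$, no modus tollens concerns $g_{m,n+1}$. The only usable implication sits at the left factor, namely the clause $\mathtt{y}\to(\mathtt{m_{a,b}}\lor(\mathtt{m_{a,1-b}}\land\mathtt{U}(t)))$ of $\mathtt{G_{a,b}}$, and it yields only a disjunction. Likewise, the $\mathtt{x'}$-conjunct $(\mathtt{m_{a,b}}\land\mathtt{t})\to(\mathtt{m_{a,b}}\lor(\mathtt{m_{1-a,b}}\land\mathtt{R}(t)))$ does not force a parity change, because of its first disjunct. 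These escape disjuncts are deliberate: they are needed for Lemma~\ref{lm:tilingpowerset}.

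The paper therefore proceeds in three steps.
(1) It propagates ``at most one $\mathtt{m_{a,b}}$ and at most one $\mathtt{t}$'' leftwards from the diagonal along $x_{m+1}\cdot g_{m,n}\ni g_{m+1,n}$. This uses the uniqueness conjuncts of $\mathtt{x'}$ and needs only the diagonal information from Part I.
(2) It obtains $\bigvee\mathtt{G_{a,b}}$ at every point via $\gamma$, as above.
(3) It inducts from the diagonal, obtaining $g_{i,j+1}$ from $g_{i,j}$ and $g_{i+1,j+1}$. Let $(a,b)$ be the parities of $(i,j)$. The $\mathtt{y}$-clause at $g_{i,j}$ gives $g_{i,j+1}\Vdash\mathtt{m_{a,b}}\lor(\mathtt{m_{a,1-b}}\land\mathtt{U}(t))$. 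The first disjunct is excluded: with the tile letter from (2), the $\mathtt{x'}$-conjunct along $x_{i+1}\cdot g_{i,j+1}\ni g_{i+1,j+1}$ would make $g_{i+1,j+1}$ satisfy $\mathtt{m_{a,b}}\lor\mathtt{m_{1-a,b}}$ besides $\mathtt{m_{1-a,1-b}}$, contradicting uniqueness.

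The matching conditions are verified inside this same induction. For horizontal matching no extra (p)/(s) step is needed, since the octant arrows already have the form $x_{m+1}\cdot g_{m,n}\ni g_{m+1,n}$.

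Finally, a diagonal point $g_{n,n}$ is never the right argument of an $x$-arrow, so tile uniqueness is not available there. The paper sidesteps this by tiling only $\{0\leq m<n\leq k\}$, which is a translate of $\mathbb{O}(k-1)$.
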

\begin{proof}
    Let $\mathcal{W}$ be an arbitrary finite tile set, and suppose $\mathsf{TWJ}^+\nvdash\psi^{}_\mathcal{W}$. Then by completeness, there is an (h), (p), (s) frame $\mathfrak{F}=(K, \cdot, N)$ that refutes $\psi^{}_\mathcal{W}$. Hence there is a model $\mathfrak{M}=(\mathfrak{F}, V)$ over $\mathfrak{F}$ and a point, which we denote $g_{0,0}\in K$, such that
        $$\mathfrak{M}, g_{0,0}\Vdash\alpha \land \beta_1\land \beta_2\land  \gamma\land \mathtt{G_{0,0}}, $$ 
    but
        $$\mathfrak{M}, g_{0,0}\nVdash \mathtt{x'}\to\mathtt{m_{1,0}^c}.$$
    From here, we show that for all $k\in \mathbb{N}$, $\mathcal{W}$ tiles the octant $\mathbb{O}(k)$. 
    Accordingly, let $k\in \mathbb{N}$ be arbitrary. The proof proceeds in four steps:
    \begin{enumerate}[label=\Roman*.]
        \item We construct a lower staircase within $\mathfrak{M}$ for $\mathbb{O}(k)$.
        \item We extend this to an upper staircase for $\mathbb{O}(k)$.
        \item We generate the octant $\mathbb{O}(k)$.
        \item We define a function $\tau:\mathbb{O}(k)\to\mathcal{W}$ and verify that it is a tiling.
    \end{enumerate}
    
    \textbf{Part I.} First, we find $x_1,\hdots, x_k; y_1, \hdots, y_k; g_{1,0}, g_{1,1},$ $g_{2,1}, g_{2,2}, \hdots, g_{k,k-1}, g_{k,k}$ with
    \begin{alignat}{3}
        g_{2i+1,2i} &\nVdash \mathtt{m_{1,0}^c},&\qquad g_{2i+1,2i+1} \nVdash \mathtt{m_{1,1}^c}, &\qquad x_i\Vdash \mathtt{x'},\label{eq:diagonal1} \\
        g_{2i,2i-1} &\nVdash \mathtt{m_{0,1}^c},&\qquad g_{2i,2i} \nVdash \mathtt{m_{0,0}^c}, &\qquad y_i\Vdash \mathtt{y'}, \notag
    \end{alignat}
    and
        \begin{align}
            g_{0,0}\cdot x_1&\ni g_{1,0},&\qquad g_{1,0}\cdot y_1&\ni g_{1,1},\label{eq:stair}\\
            g_{1,1}\cdot x_2&\ni g_{2,1},&\qquad g_{2,1}\cdot y_2&\ni g_{2,2},\notag\\
            &\vdots&&\vdots\notag\\
            g_{k-1,k-1}\cdot x_k&\ni g_{k,k-1},&\qquad g_{k,k-1}\cdot y_k&\ni g_{k,k}.\notag
        \end{align}        
    We find the points recursively and establish their properties by induction on the last sequence~\eqref{eq:stair}: 
    \[
        g_{0,0}\cdot x_1\ni g_{1,0},\quad g_{1,0}\cdot y_1\ni g_{1,1},\quad  g_{1,1}\cdot x_2\ni g_{2,1}, \quad\hdots
    \]
    The base of the induction follows from $g_{0,0}\nVdash \mathtt{x'}\to\mathtt{m_{1,0}^c}$. Indeed, by the semantic clause for $\to$, there are points, which we denote $x_1, g_{1,0}\in K$, such that $g_{0,0}\cdot x_1\ni g_{1,0}$ and $x_1\Vdash \mathtt{x'}$ but $g_{1,0}\nVdash \mathtt{m_{1,0}^c}$. (We do not claim, and do not need, $g_{0,0}\nVdash \mathtt{m_{0,0}^c}$.)

    The inductive step splits into two cases: (a) assuming the claim up to some $g_{n,n}\cdot x_{n+1}\ni g_{n+1,n}$, and (b) assuming the claim up to some $g_{n+1,n}\cdot y_{n+1}\ni g_{n+1,n+1}$. As the cases are analogous, we only cover the former. By Lemma~\ref{lm:infChain}, there are points $g'_{1,0}, g'_{1,1},$ $g'_{2,1}, g'_{2,2}, \hdots, g'_{n,n}, g'_{n+1,n}$ such that
    \begin{alignat*}{2}
        g_{0,0}\cdot x_1&\ni g'_{1,0},&\qquad g'_{1,0}\cdot y_1&\ni g'_{1,1},\\
        g'_{1,1}\cdot x_2&\ni g'_{2,1},&\qquad g'_{2,1}\cdot y_2&\ni g'_{2,2},\\
        &\vdots&\qquad &\vdots\\
        g'_{n-1,n-1}\cdot x_{n}&\ni g'_{n,n-1},&\qquad g'_{n,n-1}\cdot y_n&\ni g'_{n,n},\\
        g'_{n,n}\cdot x_{n+1}&\ni g'_{n+1,n},
    \end{alignat*}    
    and for each $g'_{i,j}$,
    \[
        g_{0,0}\cdot g'_{i,j}\ni g_{i,j}.
    \]
    From the latter, along with the fact that $g_{0,0}\Vdash \alpha$ and the inductive hypothesis, it follows by modus tollens that
    \begin{alignat*}{2}
        g'_{2i+1,2i} &\nVdash \mathtt{m_{1,0}^c},&\qquad g'_{2i+1,2i+1} \nVdash \mathtt{m_{1,1}^c},\\
        g'_{2i,2i-1} &\nVdash \mathtt{m_{0,1}^c},&\qquad g'_{2i,2i} \nVdash \mathtt{m_{0,0}^c}.
        \end{alignat*}    
    We now find the points $y_{n+1}$ and $g'_{n+1, n+1}$ required for the inductive step. If $n$ is even, we need $g'_{n+1,n+1}\nVdash  \mathtt{m_{1,1}^c}$, while if $n$ is odd, we need $g'_{n+1,n+1}\nVdash  \mathtt{m_{0,0}^c}$. As the two cases are analogous, we consider only the former and assume that $n$ is even. 
    
    By the inductive hypothesis, we know that $g_{n+1,n}\nVdash  \mathtt{m_{1,0}^c}$. Together with $g_{0,0}\cdot g'_{n+1,n}\ni g_{n+1,n}$ and $g_{0,0}\Vdash \beta_1$ (so in particular $g_{0,0} \Vdash(\mathtt{y'}\to \mathtt{m_{1,1}^c})\to \mathtt{m_{1,0}^c}$), this implies, by modus tollens, that
        \[
            g'_{n+1,n}\nVdash \mathtt{y'}\to \mathtt{m_{1,1}^c}.
        \]
    Hence there are witnessing points $y_{n+1}$ and $g'_{n+1, n+1}$ such that 
    \[
        y_{n+1}\Vdash \mathtt{y'}, \quad g'_{n+1, n+1}\nVdash \mathtt{m_{1,1}^c}, \quad \text{and}\quad  g'_{n+1,n}\cdot y_{n+1}\ni g'_{n+1, n+1},
    \]
    which completes the induction.

    \textbf{Part II.} The above constructs the lower staircase $g_{0,0},g_{1,0}, g_{1,1},\hdots, g_{k,k-1}, g_{k,k}$. For the upper staircase, observe that for all $0\leq i<k$,
    \[
        (g_{i,i}\cdot x_{i+1})\cdot y_{i+1}\ni g_{i+1, i+1},
    \]
    and hence, by (s), 
    \[
        x_{i+1}\cdot (g_{i,i}\cdot y_{i+1})\ni g_{i+1, i+1}.
    \]
    Thus there are points $g_{0,1}, g_{1,2}, g_{2,3}, \hdots, g_{k-1,k}$ such that
    \begin{alignat*}{2}
        g_{0,0}\cdot y_1&\ni g_{0,1},&\qquad x_1\cdot g_{0,1}&\ni g_{1,1},\\
        g_{1,1}\cdot y_2&\ni g_{1,2},&\qquad x_2\cdot g_{1,2}&\ni g_{2,2},\\
        &\vdots&\qquad &\vdots\\
        g_{k-1,k-1}\cdot y_k&\ni g_{k-1,k},&\qquad x_k\cdot g_{k-1,k}&\ni g_{k,k}.
    \end{alignat*}  
    This yields the upper staircase (see Figure~\ref{fig:staircase}).

\begin{figure}[H]
\begin{center}
\begin{tikzpicture}[>=stealth, node distance=2cm]
  
  \node (a2) at (6,0) {$g_{0,0}$};
  \node (b2) at (8,0) {$g_{1,0}$};
  \node (c2) at (8,2) {$g_{1,1}$};
  \node (b3) at (10,2) {$g_{2,1}$};     
  \node (c3) at (10,4) {$g_{2,2}$};      \node at (11,5) {$\iddots$};
  \node (k-1) at (12,6) {$g_{k-1,k-1}$};
  \node (k-1h) at (14,6) {$g_{k,k-1}$};
  \node (k-1v) at (12,8) {$g_{k-1,k}$};
  \node (k) at (14,8) {$g_{k,k}$};
  
  \node (d2) at (6,2) {$g_{0,1}$};
  \node (d3) at (8,4) {$g_{1,2}$};      
  

  \draw[->] (a2) -- node[midway,below]{\footnotesize $\mathrel{\cdot} x_1$} (b2);
  \draw[->] (b2) -- node[midway,right]{\footnotesize $\mathrel{\cdot} y_1$} (c2);
  \draw[->] (c2) -- node[midway,below]{\footnotesize $\mathrel{\cdot} x_2$} (b3);
  \draw[->] (b3) -- node[midway,right]{\footnotesize $\mathrel{\cdot} y_2$} (c3);
  \draw[->] (k-1) -- node[midway,below]{\footnotesize $\mathrel{\cdot} x_k$} (k-1h);
  \draw[->] (k-1h) -- node[midway,right]{\footnotesize $\mathrel{\cdot} y_k$} (k);

  \draw[->, dashed] (a2) -- node[midway,left]{ \footnotesize $\mathrel{\cdot} y_1$ } (d2);
  \draw[->, dashed] (d2) -- node[midway,above]{\footnotesize $x_1\mathrel{\cdot}$} (c2);
    \draw[->, dashed] (d3) -- node[midway,above]{\footnotesize $x_2\mathrel{\cdot}$} (c3);
    \draw[->, dashed] (c2) -- node[midway,left]{ \footnotesize $\mathrel{\cdot} y_2$ } (d3);
    \draw[->, dashed] (k-1) -- node[midway,left]{\footnotesize $\mathrel{\cdot} y_k$} (k-1v);
  \draw[->, dashed] (k-1v) -- node[midway,above]{\footnotesize $x_k\mathrel{\cdot}$} (k);
\end{tikzpicture}
\caption{Extending the lower staircase to the upper staircase through $\text{(s)}  \qquad (a\cdot b)\cdot c\subseteq b\cdot (a\cdot c).$}
  \label{fig:upperstaircase}
\end{center}
\end{figure}

    \textbf{Part III.} To generate the full octant $\mathbb{O}(k)$, we need points $g_{i,j}$ for all $0\leq i\leq j\leq k$ with
    \begin{align}
        x_{i+1}\cdot g_{i,j}\ni g_{i+1, j}\qquad \text{and} \qquad g_{i,j}\cdot y_{j+1}\ni g_{i, j+1}.\label{eq:xOntheleft}
    \end{align}
    We obtain these points recursively. The base case is the upper staircase, where~\eqref{eq:xOntheleft} already holds. For the inductive step, suppose
    \[
        x_{i+1}\cdot g_{i,j}\ni g_{i+1, j}\qquad \text{and} \qquad g_{i+1,j}\cdot y_{j+1}\ni g_{i+1, j+1}.
    \]
    Then $(x_{i+1}\cdot g_{i,j})\cdot y_{j+1}\ni g_{i+1, j+1}$, so by (p), $x_{i+1}\cdot (g_{i,j}\cdot y_{j+1})\ni g_{i+1, j+1}$. Thus there is a point $g_{i,j+1}$ such that
    \[
        x_{i+1}\cdot g_{i,j+1}\ni g_{i+1, j+1}\qquad \text{and} \qquad g_{i,j}\cdot y_{j+1}\ni g_{i, j+1},
    \]
    which is the required point satisfying the conditions of the inductive step, thereby completing the induction (see Figure~\ref{fig:staircase}).

\begin{figure}[H]
\begin{center}
\begin{tikzpicture}[>=stealth, node distance=2cm]
  
  \node (a2) at (6,0) {$g_{0,0}$};
  \node (c2) at (8,2) {$g_{1,1}$};
  \node (c3) at (10,4) {$g_{2,2}$};     \node at (13,7) {$\iddots$};
  \node (g23) at (10,6) {$g_{2,3}$};    \node at (10,7) {$\vdots $};
  \node (g33) at (12,6) {$g_{3,3}$};    \node at (12,7) {$\vdots $};
  \node (k-1) at (14,8) {$g_{k-1,k-1}$};
  \node (k-1v) at (14,10) {$g_{k-1,k}$};
  \node (k) at (16,10) {$g_{k,k}$};
  
  \node (d2) at (6,2) {$g_{0,1}$};
  \node (d3) at (8,4) {$g_{1,2}$};      
  \node (d4) at (6,4) {$g_{0,2}$};      
  \node (g03) at (6,6) {$g_{0,3}$};     \node at (6,7) {$\vdots $};
  \node (g13) at (8,6) {$g_{1,3}$};     \node at (8,7) {$\vdots $};
  
  \node (g0k-1) at (6,8) {$g_{0,k-1}$};
  \node (g0k) at (6,10) {$g_{0,k}$};
  \node (g1k-1) at (8,8) {$g_{1,k-1}$};
  \node (g1k) at (8,10) {$g_{1,k}$};
  \node (g2k-1) at (10,8) {$g_{2,k-1}$};
  \node (g2k) at (10,10) {$g_{2,k}$};
  \node (g3k-1) at (12,8) {$g_{3,k-1}$};    \node at (13,8) {$\hdots $};
  \node (g3k) at (12,10) {$g_{3,k}$};       \node at (13,10) {$\hdots $};


  \draw[->] (a2) -- node[midway,left]{ \footnotesize $\mathrel{\cdot} y_1$ } (d2);
  \draw[->] (d2) -- node[midway,below]{\footnotesize $x_1\mathrel{\cdot}$} (c2);
    \draw[->] (d3) -- node[midway,above]{\footnotesize $x_2\mathrel{\cdot}$} (c3);
    \draw[->] (c2) -- node[midway,right]{ \footnotesize $\mathrel{\cdot} y_2$ } (d3);
    \draw[->] (k-1) -- node[midway,right]{\footnotesize $\mathrel{\cdot} y_k$} (k-1v);
  \draw[->] (k-1v) -- node[midway,above]{\footnotesize $x_k\mathrel{\cdot}$} (k);
  \draw[->] (c3) -- node[midway,right]{\footnotesize $\mathrel{\cdot} y_3$} (g23);
  \draw[->] (g23) -- node[midway,above]{\footnotesize $x_3\mathrel{\cdot}$} (g33);
    \draw[->, dashed] (d4) -- node[midway,above]{\footnotesize $x_1\mathrel{\cdot}$} (d3);
    \draw[->, dashed] (d2) -- node[midway,left]{ \footnotesize $\mathrel{\cdot} y_2$ } (d4);
    \draw[->, dashed] (g03) -- node[midway,above]{\footnotesize $x_1\mathrel{\cdot}$} (g13);
    \draw[->, dashed] (d4) -- node[midway,left]{ \footnotesize $\mathrel{\cdot} y_3$ } (g03);
    \draw[->, dashed] (g13) -- node[midway,above]{\footnotesize $x_2\mathrel{\cdot}$} (g23);
    \draw[->, dashed] (d3) -- node[midway,right]{ \footnotesize $\mathrel{\cdot} y_3$ } (g13);

    \draw[->, dashed] (g0k-1) -- node[midway,left]{\footnotesize $\mathrel{\cdot} y_k$} (g0k);
    \draw[->, dashed] (g1k-1) -- node[midway,right]{\footnotesize $\mathrel{\cdot} y_k$} (g1k);
    \draw[->, dashed] (g2k-1) -- node[midway,right]{\footnotesize $\mathrel{\cdot} y_k$} (g2k);
    \draw[->, dashed] (g3k-1) -- node[midway,right]{\footnotesize $\mathrel{\cdot} y_k$} (g3k);
    \draw[->, dashed] (g0k-1) -- node[midway,above]{\footnotesize $x_1\mathrel{\cdot}$} (g1k-1);
    \draw[->, dashed] (g0k) -- node[midway,above]{\footnotesize $x_1\mathrel{\cdot}$} (g1k);
    \draw[->, dashed] (g1k-1) -- node[midway,above]{\footnotesize $x_2\mathrel{\cdot}$} (g2k-1);
    \draw[->, dashed] (g1k) -- node[midway,above]{\footnotesize $x_2\mathrel{\cdot}$} (g2k);
    \draw[->, dashed] (g2k-1) -- node[midway,above]{\footnotesize $x_3\mathrel{\cdot}$} (g3k-1);
    \draw[->, dashed] (g2k) -- node[midway,above]{\footnotesize $x_3\mathrel{\cdot}$} (g3k);
    
\end{tikzpicture}
\caption{Generating the octant $\mathbb{O}(k)$ from the upper staircase through $\text{(p)}  \qquad (a\cdot b)\cdot c\subseteq a\cdot (b\cdot c)$.}
  \label{fig:staircase}
\end{center}
\end{figure}

    \textbf{Part IV.} Having identified the octant within the refuting model, we define a tiling $\tau:\mathbb{O}(k)\to \mathcal{W}$ by 
    \[
        (m,n)\mapsto t\in \mathcal{W} \text{ such that } g_{m,n}\Vdash \mathtt{t}.
    \]    
    We show that $\tau$ is well-defined and satisfies the tiling conditions. 
    
    For well-definedness, we first verify that each octant point $g_{m,n}$ satisfies \textit{at most} one tile formula $\mathtt{t}$ for $t\in \mathcal{W}$. For convenience, we restrict to points with $m>n$. This suffices because tiling $\{(m,n)\mid 0\leq m<n\leq k\}$ for all $k$ is equivalent to tiling $\{(m,n)\mid 0\leq m\leq n\leq k\}$ for all $k$. 
    
    At the same time, we verify that each octant point satisfies at most one $\mathtt{m_{i,j}}$ for $i,j\in \{0,1\}$; cf.~\eqref{eq:diagonal1}, this is already verified for the diagonal points $g_{m,m}$, $1\leq m\leq k$. Therefore, it is enough to show the following:
    \begin{center}
        If $g_{m+1,n}$ satisfies at most one $\mathtt{m_{i,j}}$, \\ 
        then $g_{m,n}$ satisfies at most one $\mathtt{m_{i,j}}$ and at most one $\mathtt{t}$ for $t\in \mathcal{W}$.
    \end{center}
    So suppose $g_{m+1,n}$ satisfies at most one $\mathtt{m_{i,j}}$. Since $x_{m+1}\Vdash \mathtt{x'}$, we have
    \[
        x_{m+1}\Vdash \bigwedge_{(i,j)\neq (i',j')}\left[(\mathtt{m_{i,j}}\land \mathtt{m_{i',j'}})\to (\mathtt{m_{0,0}}\land \mathtt{m_{0,1}})\right]\land \bigwedge_{\substack{t,t'\in \mathcal{W}, \\ t\neq t'}}\left[(\mathtt{t}\land \mathtt{t}')\to (\mathtt{m_{0,0}}\land \mathtt{m_{0,1}})\right].
    \]
    From $x_{m+1}\cdot g_{m,n}\ni g_{m+1, n}$, it then follows by modus tollens that $g_{m,n}$ satisfies at most one $\mathtt{m_{i,j}}$ and at most one $\mathtt{t}$, as required.

    Next, we show that for all $0\leq m\leq n\leq k$,
    \begin{align}
        g_{m,n}\Vdash \bigvee_{i,j\in\{0,1\}}\mathtt{G_{i,j}}.\label{eq:octantisgrid}
    \end{align}
For $m=n=0$, we are done, since $g_{0,0}\Vdash \mathtt{G_{0,0}}$. So suppose $(m,n)\neq (0,0)$. By~\eqref{eq:stair}, if $m>0$, then
\[
    ((\cdots((((g_{0,0}\cdot x_1)\cdot y_1)\cdot x_2)\cdot y_2)\cdots )\cdot x_m)\cdot y_m\ni g_{m,m},
\]
and hence, by~\eqref{eq:xOntheleft}, we have (also for $m=0$)
\[
    ((\cdots (((((\cdots((((g_{0,0}\cdot x_1)\cdot y_1)\cdot x_2)\cdot y_2)\cdots )\cdot x_m)\cdot y_m)\cdot y_{m+1})\cdot y_{m+2})\cdots )\cdot y_{n-1})\cdot y_{n}\ni g_{m,n}.
\]
Repeated applications of (p) then yield
\[
    g_{0,0}\cdot (x_1\cdot (y_1\cdot (x_2\cdot (y_2\cdot (\cdots(x_m\cdot( y_m\cdot (y_{m+1}\cdot (y_{m+2}\cdot (\cdots (y_{n-1}\cdot y_{n})\cdots)))))\cdots)))))\ni g_{m,n}.
\]
Thus there exists a point $z$ such that $g_{0,0}\cdot z\ni g_{m,n}$ and
\[
    x_1\cdot (y_1\cdot (x_2\cdot (y_2\cdot (\cdots(x_m\cdot( y_m\cdot (y_{m+1}\cdot (y_{m+2}\cdot (\cdots (y_{n-1}\cdot y_{n})\cdots)))))\cdots))))\ni z.
\]
Since all $x_l\Vdash \mathtt{x'}$ and all $y_l\Vdash \mathtt{y'}$, we have $x_l,y_l\Vdash \mathtt{p_\top}\land (\mathtt{p_\top}\to \mathtt{p_\top})$, whence $z\Vdash \mathtt{p_\top}$ by induction. Hence, using that $g_{0,0}\Vdash \gamma$, we precisely get that 
    \[
        g_{m,n}\Vdash \bigvee_{i,j\in\{0,1\}}\mathtt{G_{i,j}}.
    \]

In particular, each octant point $g_{m,n}$ satisfies \textit{at least} one tile formula $\mathtt{t}$, and therefore exactly one. Thus $\tau$ is well-defined. 

It remains to verify that $\tau$ meets the tiling conditions: 
\[
    \tau_E(i,j) = \tau_W(i+1,j) \quad \text{and} \quad \tau_N(i,j) = \tau_S(i,j+1),
\]
which, by the definition of $\tau$ and the formulas $\mathtt{t}, \mathtt{R}(t)$, $\mathtt{U}(t)$, amounts to showing that
\[
    \text{if }g_{i,j}\Vdash \mathtt{t} \text{ then } g_{i+1,j}\Vdash \mathtt{R}(t), \qquad \text{and} \qquad 
    \text{if }g_{i,j}\Vdash \mathtt{t} \text{ then } g_{i,j+1}\Vdash \mathtt{U}(t).
\]
To this end, we refine~\eqref{eq:octantisgrid} by showing that for all $0\leq i\leq j\leq k$, 
 \begin{align*}
     &\text{If $i$ and $j$ are both even: } && g_{i,j}\Vdash \mathtt{G_{0,0}},\\
    &\text{If $i$ is even and $j$ is odd: } && g_{i, j} \Vdash \mathtt{G_{0,1}},\\
    &\text{If $i$ is odd and $j$ is even: } && g_{i,j} \Vdash \mathtt{G_{1,0}},\\
    &\text{If $i$ and $j$ are both odd: } && g_{i,j} \Vdash \mathtt{G_{1,1}}.
 \end{align*}
The proof is by induction, with the diagonal points $g_{i,i}$ for $0\leq i\leq k$ as the base case. For $i=0$, we already have $g_{0,0}\Vdash \mathtt{G_{0,0}}$. For $i>0$, recall (cf.~\eqref{eq:diagonal1}) that 
    \[
        \text{for $i$ even: }\quad g_{i,i}\nVdash \mathtt{m_{0,0}^c}, \qquad \text{for $i$ odd: }\quad g_{i,i}\nVdash \mathtt{m_{1,1}^c}. 
    \]
A fortiori, 
    \[
        \text{for $i$ even: }\quad g_{i,i}\nVdash \mathtt{G_{0,1}}\lor \mathtt{G_{1,0}}\lor \mathtt{G_{1,1}},\qquad \text{for $i$ odd: }\quad g_{i,i}\nVdash \mathtt{G_{0,1}}\lor \mathtt{G_{1,0}}\lor \mathtt{G_{0,0}}, 
    \]
hence
    \[
        \text{for $i$ even: }\quad g_{i,i}\Vdash \mathtt{G_{0,0}}, \qquad \text{for $i$ odd: }\quad g_{i,i}\Vdash \mathtt{G_{1,1}}. 
    \]
This completes the base case. In the inductive step, for $0\leq i\leq j<k$, we show:
\begin{center}
    If $(i,j)$ and $(i+1,j+1)$ satisfy the inductive claim, then so does $(i,j+1)$.
\end{center}
As part of the inductive step, it is expedient to verify the tiling conditions as well, thereby completing the proof of the lemma.

This divides into four cases, depending on the parity of $i$ and $j$. As all cases are analogous, we consider the case where both $i$ and $j$ are even. Then, by assumption, $g_{i,j}\Vdash \mathtt{G_{0,0}}$ and $g_{i+1,j+1}\Vdash \mathtt{G_{1,1}}$. Thus, for some $t\in \mathcal{W}$, 
\[
    g_{i,j}\Vdash \mathtt{m_{0,0}}\land \mathtt{t}\land [\mathtt{y}\to(\mathtt{m_{0,0}}\lor (\mathtt{m_{0,1}} \land \mathtt{U}(t)))]
\]
 We are to show that for some $t'\in \mathcal{W}$,
 \[
    g_{i,j+1}\Vdash \mathtt{G_{0,1}}\land \mathtt{U}(t)\land \mathtt{t'} \qquad \text{and}\qquad  g_{i+1,j+1}\Vdash \mathtt{R}(t').
 \]
Since $j+1>i$, the point $g_{i,j+1}$ satisfies at most one $\mathtt{m_{i,j}}$ and exactly one $\mathtt{t'}$. Consequently, as we know that $g_{i,j+1}\Vdash \bigvee_{m,n\in\{0,1\}}\mathtt{G_{m,n}}$, to deduce $g_{i,j+1}\Vdash \mathtt{G_{0,1}}$, it suffices to show that $g_{i,j+1}\Vdash \mathtt{m_{0,1}}$. 

From $g_{i,j}\Vdash \mathtt{y}\to(\mathtt{m_{0,0}}\lor (\mathtt{m_{0,1}} \land \mathtt{U}(t)))$, $y_{j+1}\Vdash \mathtt{y}$, and $g_{i,j}\cdot y_{j+1}\ni g_{i,j+1}$ (cf.~\eqref{eq:xOntheleft}), we get
\[
    g_{i,j+1}\Vdash \mathtt{m_{0,0}}\lor (\mathtt{m_{0,1}} \land \mathtt{U}(t)).
\]
If we had $g_{i,j+1}\Vdash \mathtt{m_{0,0}}$, then $g_{i,j+1}\Vdash \mathtt{m_{0,0}}\land \mathtt{t'}$. Hence, since $x_{i+1}\cdot g_{i,j+1}\ni g_{i+1,j+1}$ and $x_{i+1}\Vdash \mathtt{x'}$ -- which implies $x_{i+1}\Vdash (\mathtt{m_{0,0}}\land \mathtt{t'})\to (\mathtt{m_{0,0}}\lor (\mathtt{m_{1,0}}\land \mathtt{R}(t')))$ -- it would follow that
\[
    g_{i+1, j+1}\Vdash \mathtt{m_{0,0}}\lor \mathtt{m_{1,0}}.
\]
But that contradicts $g_{i+1, j+1}\Vdash\mathtt{G_{1,1}}$, as this implies $g_{i+1, j+1}\Vdash\mathtt{m_{1,1}}$ and each octant point satisfies at most one of $\mathtt{m_{0,0}}, \hdots ,\mathtt{m_{1,1}}$. Thus $g_{i,j+1}\Vdash \mathtt{m_{0,1}} \land \mathtt{U}(t)$, and therefore
\[
    g_{i,j+1}\Vdash \mathtt{G_{0,1}} \land \mathtt{U}(t).
\]
By the same argument, because $x_{i+1}\cdot g_{i,j+1}\ni g_{i+1,j+1}$ and $x_{i+1}\Vdash \mathtt{x'}$ -- which also implies $x_{i+1}\Vdash (\mathtt{m_{0,1}}\land \mathtt{t'})\to (\mathtt{m_{0,1}}\lor (\mathtt{m_{1,1}}\land \mathtt{R}(t')))$ -- we obtain
\[
    g_{i+1, j+1}\Vdash \mathtt{m_{0,1}}\lor (\mathtt{m_{1,1}}\land \mathtt{R}(t')).
\]
Since $g_{i+1, j+1}\Vdash\mathtt{m_{1,1}}$, we have $g_{i+1, j+1}\nVdash \mathtt{m_{0,1}}$, whence
\[
    g_{i+1, j+1}\Vdash \mathtt{R}(t'),
\]
as required. This concludes the proof of the lemma.
\end{proof}

\subsection{The second tiling lemma.} The proof of the second tiling lemma requires a definition and a couple of auxiliary results. We begin with the definition.

\begin{Definition}
    Let $(K, \cdot)$ and $(K', \cdot')$ be pairs such that $\cdot:K\times K\to \mathcal{P}(K)$ and $\cdot':K'\times K'\to \mathcal{P}(K')$. A map $f:(K, \cdot)\to (K', \cdot')$ is a \textit{p-morphism} if 
    \begin{enumerate}[leftmargin=50pt]
        \item [\textnormal{(forth)}] if $x\cdot y\ni z$, then $f(x)\cdot'f(y)\ni f(z)$,
        \item [\textnormal{(back)}] if $f(x)\cdot'y'\ni z'$, then there are $y,z\in K$ such that $f(y)=y', f(z)=z'$ and $x\cdot y\ni z$.\footnote{Typically, for Routley--Meyer frames $(K, \cdot, N)$, $(K', \cdot', N')$, one would include conditions on $f$ pertaining to $N$, $N'$ as well. However, for our purposes, the current definition will do.}
    \end{enumerate}
\end{Definition}
On such pairs $(K, \cdot)$, given any valuation $V:\mathsf{Prop}\to \mathcal{P}(K)$, the clauses of Definition~\ref{def:clauses} are well defined. The following is then a standard result.

\begin{Fact}\label{fact:pmorph}
     Let $f:(K,\cdot)\to (K', \cdot')$ be a p-morphism. For any valuation $V':\mathsf{Prop}\to \mathcal{P}(K')$, setting $V(p)\mathrel{:=}f^{-1}[V'(p)]$ defines a structure $(K, \cdot, V)$ such that for all $x\in K$ and $\varphi\in\mathcal{L}$,
     \[
        (K, \cdot, V), x \Vdash \varphi \qquad \text{iff}\qquad (K', \cdot', V'), f(x) \Vdash \varphi.
     \]
\end{Fact}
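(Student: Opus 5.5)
The proof is by structural induction on $\varphi\in\mathcal{L}$, and the claim is proved simultaneously for all $x\in K$. The valuation $V(p)\mathrel{:=}f^{-1}[V'(p)]$ is an arbitrary subset of $K$, which is permitted here: on pairs $(K,\cdot)$ no upset condition is imposed, and the clauses of Definition~\ref{def:clauses} make sense for any $V$. For propositional letters the claim is immediate from the definition of $V$: $x\in V(p)$ iff $f(x)\in V'(p)$. For $\land$ and $\lor$ the clauses are pointwise, so the equivalence follows directly from the induction hypotheses for the two subformulas at the same point $x$.

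The only substantive case is implication, and I would prove its two directions separately.

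\emph{Left to right.} Suppose $x\Vdash\varphi\to\psi$, and let $y',z'\in K'$ satisfy $f(x)\cdot' y'\ni z'$ and $y'\Vdash\varphi$. By (back) there are $y,z\in K$ with $f(y)=y'$, $f(z)=z'$ and $x\cdot y\ni z$. The induction hypothesis for $\varphi$ at $y$ gives $y\Vdash\varphi$. Hence $z\Vdash\psi$, and the induction hypothesis for $\psi$ at $z$ gives $z'=f(z)\Vdash\psi$.

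\emph{Right to left.} Suppose $f(x)\Vdash\varphi\to\psi$, and let $x\cdot y\ni z$ with $y\Vdash\varphi$. By (forth), $f(x)\cdot' f(y)\ni f(z)$. The induction hypothesis gives $f(y)\Vdash\varphi$, hence $f(z)\Vdash\psi$, and again by the induction hypothesis $z\Vdash\psi$.

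If fusion and $\mathfrak{t}$ were in the language, the fusion case would be handled dually. For $\varphi\circ\psi$ one needs a back condition on the \emph{third} coordinate, which our definition does not supply. This is exactly why the fact is stated only for $\mathcal{L}$, and no extra work is needed.

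I do not expect a genuine obstacle. The only point needing care is that the induction hypothesis must quantify over all points of $K$, not just $x$, since the implication case applies it at the new points $y$ and $z$ supplied by (back) and (forth).
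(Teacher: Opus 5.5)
Your proof is correct: the paper gives no proof of this Fact and simply calls it standard, and your structural induction, quantified over all points of $K$, with (back) handling the left-to-right direction and (forth) the right-to-left direction of the implication clause, is exactly that standard argument. Your side remark is also accurate: fusion would need an additional back condition on the third coordinate, and $\mathfrak{t}$ would need a condition linking $N$ and $N'$, neither of which is needed for $\mathcal{L}$.
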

We now define a p-morphism that connects the semilattice frame $(\mathcal{P}_{\mathrm{fin}}(\mathbb{N}), \cup, \varnothing)$ (recall Example~\ref{ex:finitepower}) to a structure closely related to tilings of the quadrant $\mathbb{N}^2$.\footnote{I thank Johan van Benthem for repeatedly encouraging me to seek a direct connection.}

\begin{Lemma}
    Let $(\mathcal{P}_{\mathrm{fin}}(\mathbb{N}), \cup)$ be the semilattice of finite subsets of $\mathbb{N}$ under union, and let $(\mathbb{N}^2, \cdot)$ be the structure where 
    \[
        (m,n)\cdot (m',n')\ni (m'',n'')\quad \text{iff}\quad \begin{cases}
            \max\{m,m'\} \leq m'' \leq m+m',\text{ and}\\
            \max\{n,n'\} \leq n'' \leq n+n'.
            \end{cases}
    \]
    Let $2\mathbb{N}=\{2n\mid n\in \mathbb{N}\}$ denote the even numbers and $2\mathbb{N}+1=\{2n+1\mid n\in \mathbb{N}\}$ the odd numbers. Then $f:(\mathcal{P}_{\mathrm{fin}}(\mathbb{N}), \cup)\to (\mathbb{N}^2, \cdot)$ given by
    \[
        f(X)\mathrel{:=} (|X\cap 2\mathbb{N}|, |X\cap (2\mathbb{N}+1)|)
    \]
    is a p-morphism.
\end{Lemma}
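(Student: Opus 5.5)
The plan is to check the two p-morphism conditions directly, treating the even and odd parts of a finite set separately. The structure $(\mathbb{N}^2,\cdot)$ is a product of two copies of the same one-dimensional relation, namely $a\cdot b\ni c$ iff $\max\{a,b\}\leq c\leq a+b$. The map $f$ also splits: $f(X)=(f_0(X\cap 2\mathbb{N}),f_1(X\cap(2\mathbb{N}+1)))$, where each $f_i$ is just cardinality. So the whole argument reduces to one observation about cardinalities of finite sets inside an infinite set, used once for $2\mathbb{N}$ and once for $2\mathbb{N}+1$.

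\textbf{Forth.} Suppose $X\cup Y=Z$. Intersect with $2\mathbb{N}$: we get $(X\cap 2\mathbb{N})\cup(Y\cap 2\mathbb{N})=Z\cap 2\mathbb{N}$. For any finite sets $A\cup B=C$ we have $\max\{|A|,|B|\}\leq|C|\leq|A|+|B|$, so the first-coordinate inequalities hold. The odd parts give the second coordinate in the same way. Hence $f(X)\cdot f(Y)\ni f(Z)$.

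\textbf{Back.} Fix $X$ with $f(X)=(m,n)$, and suppose $(m,n)\cdot(m',n')\ni(m'',n'')$. We must find $Y$ with $f(Y)=(m',n')$ such that $f(X\cup Y)=(m'',n'')$; then take $Z\mathrel{:=}X\cup Y$. Again treat each parity class on its own. Let $A=X\cap 2\mathbb{N}$, so $|A|=m$. We need a finite $B\subseteq 2\mathbb{N}$ with $|B|=m'$ and $|A\cup B|=m''$, which is the same as $|A\cap B|=m+m'-m''$. Set $k\mathrel{:=}m+m'-m''$. The hypothesis $m''\leq m+m'$ gives $k\geq 0$, and $\max\{m,m'\}\leq m''$ gives $k\leq\min\{m,m'\}$. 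So we can let $B$ consist of $k$ elements of $A$ together with $m'-k$ even numbers not in $A$. Such numbers exist because $A$ is finite and $2\mathbb{N}$ is infinite. Do the same on the odd side to get $B'$, and put $Y\mathrel{:=}B\cup B'$. Then $Y$ is finite, $f(Y)=(m',n')$, and $f(X\cup Y)=(m'',n'')$.

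The main step is this back condition, specifically the choice of the overlap size $k$ so that it lies between $0$ and $\min\{m,m'\}$. That is exactly what the two inequalities in the definition of $\cdot$ guarantee. The only other point to watch is the need for fresh elements, which is why the sets must be finite subsets of an infinite parity class.
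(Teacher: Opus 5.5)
Your proof is correct and follows essentially the same route as the paper: forth via the bounds $\max\{|A|,|B|\}\leq|A\cup B|\leq|A|+|B|$ in each parity class, and back by building $Y$ from $m+m'-m''$ elements of $X$ plus $m''-m$ fresh elements of the same parity (and likewise on the odd side). Your explicit check that the overlap $k$ lies in $[0,\min\{m,m'\}]$ is, if anything, a bit more complete than the paper's stated justification.
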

\begin{proof}
    For (forth), simply note that if $X\cup Y=Z$, then $Z$ contains all even (resp.\ odd) numbers of $X$ and $Y$, and, conversely, $X$ and $Y$ jointly contain all even (resp.\ odd) numbers of $Z$.

    For (back), assume $(|X\cap 2\mathbb{N}|, |X\cap (2\mathbb{N}+1)|)\cdot (m', n')\ni (m'', n'')$, i.e.,
    \begin{align*}
        \max\{|X\cap 2\mathbb{N}|,m'\} &\leq m'' \leq |X\cap 2\mathbb{N}|+m',\\
        \max\{|X\cap (2\mathbb{N}+1)|,n'\} &\leq n'' \leq |X\cap (2\mathbb{N}+1)|+n'.
    \end{align*}
    Let $Y\in \mathcal{P}_{\mathrm{fin}}(\mathbb{N})$ be so that $Y\cap 2\mathbb{N}$ contains $m''-|X\cap 2\mathbb{N}|$ even numbers not in $X$ and $m'-(m''-|X\cap 2\mathbb{N}|)$ even numbers from $X$. This is possible since $X$ is finite, $m''-|X\cap 2\mathbb{N}|\geq 0$, and $m'\geq m''-|X\cap 2\mathbb{N}|$. The odd elements of $Y$ are defined analogously. Set $Z\mathrel{:=}X\cup Y$. We claim that $f(Y)=(m', n')$ and $f(Z)=(m'', n'')$. By construction, 
    \[
        |Y\cap 2\mathbb{N}|=(m''-|X\cap 2\mathbb{N}|)+(m'-(m''-|X\cap 2\mathbb{N}|))=m',
    \]
    and similarly $|Y\cap (2\mathbb{N}+1)|=n'$. Hence $f(Y)=(m', n')$. Moreover, 
    \[
        |Z\cap2\mathbb{N}|=|(X\cup Y)\cap2\mathbb{N}|=|X\cap 2\mathbb{N}|+(m''-|X\cap 2\mathbb{N}|)=m'',
    \]
    and similarly $|Z\cap(2\mathbb{N}+1)|=n''$. Therefore $f(Z)=(m'',n'')$, as claimed.
\end{proof}
With this in hand, we can prove the second tiling lemma.

\begin{Lemma}\label{lm:tilingpowerset}
    If $\mathcal{W}$ tiles $\mathbb{Z}^2$, then $(\mathcal{P}_{\mathrm{fin}}(\mathbb{N}), \cup, \varnothing)\nvDash \psi^{}_\mathcal{W}$.
\end{Lemma}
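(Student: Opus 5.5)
The plan is to refute $\psi^{}_\mathcal{W}$ first on the auxiliary structure $(\mathbb{N}^2,\cdot)$ from the preceding lemma, and then pull the refutation back along the p-morphism $f$ using Fact~\ref{fact:pmorph}. Since $f(\varnothing)=(0,0)$ and $\varnothing\cup Y=Y$, it suffices to show the following. Under a suitable valuation $V'$ on $\mathbb{N}^2$, the point $(0,0)$ satisfies $\alpha\land\beta_1\land\beta_2\land\gamma\land\mathtt{G_{0,0}}$, and $(0,0)\nVdash \mathtt{x'}\to\mathtt{m_{1,0}^c}$. Then $\varnothing$ satisfies the antecedent but not the consequent, so $\varnothing\nVdash\psi^{}_\mathcal{W}$. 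This holds because $\varnothing\cup\varnothing=\varnothing$, and in the semilattice semantics $\varnothing\Vdash\varphi\to\chi$ would force $\varnothing\Vdash\chi$ whenever $\varnothing\Vdash\varphi$.

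\textbf{The valuation.} By Lemma~\ref{lm:octanttoplane}, fix a $\mathcal{W}$-tiling $\tau$ of $\mathbb{N}^2$. Define $V'$ on $\mathbb{N}^2$ as follows.
\begin{itemize}
\item $\mathtt{m_{i,j}}$ holds at $(m,n)$ iff $m\equiv i$ and $n\equiv j \pmod 2$.
\item $\mathtt{t}$ holds at $(m,n)$ iff $\tau(m,n)=t$.
\item $\mathtt{x}$ holds exactly at $(1,0)$, and $\mathtt{y}$ exactly at $(0,1)$.
\item $\mathtt{p_\top}$ holds everywhere.
\end{itemize}
Thus every point satisfies exactly one $\mathtt{m_{i,j}}$ and exactly one $\mathtt{t}$. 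Consequently $(m,n)\nVdash\mathtt{m_{i,j}^c}$ iff $(m,n)$ has parity $(i,j)$.

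\textbf{The key computation.} From the definition of $\cdot$, we have $(m,n)\cdot(1,0)\ni(m'',n'')$ iff $n''=n$ and $m''\in\{m,m+1\}$, where $m''=m$ is possible only when $m\geq 1$. Symmetrically, $(m,n)\cdot(0,1)$ consists of $(m,n)$ itself (when $n\geq1$) together with $(m,n+1)$. In particular, $(0,0)\cdot z=\{z\}$. With this in hand, each conjunct is checked in turn.

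\begin{itemize}
\item $\mathtt{G_{i,j}}$ holds at every point of parity $(i,j)$. A $\mathtt{y}$-step either stays put, making $\mathtt{m_{i,j}}$ true, or moves to $(m,n+1)$. The latter has parity $(i,1-j)$ and carries tile $\tau(m,n+1)$, which satisfies $\mathtt{U}(t)$ by the vertical tiling condition. Hence $\gamma$ and $\mathtt{G_{0,0}}$ hold at $(0,0)$.
\item $(1,0)\Vdash\mathtt{x'}$. The fourth conjunct of $\mathtt{x'}$ holds by the same argument, now using the horizontal tiling condition for $\mathtt{R}(t)$. The two uniqueness conjuncts hold vacuously, because no point satisfies two distinct $\mathtt{m}$'s or two distinct tiles. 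The $\mathtt{p_\top}$ conjuncts are trivial.
\item The consequent fails: $(0,0)\cdot(1,0)\ni(1,0)$, and $(1,0)\nVdash\mathtt{m_{1,0}^c}$.
\item $\alpha$ holds at $(0,0)$ because $(0,0)\cdot z=\{z\}$.
\item $\beta_1$ holds by contraposition. If $z\nVdash\mathtt{m_{i,1-i}^c}$, then $z$ has parity $(i,1-i)$. Then $z+(0,1)\in z\cdot(0,1)$ has parity $(i,i)$, and $(0,1)\Vdash\mathtt{y'}$, so $z\nVdash\mathtt{y'}\to\mathtt{m_{i,i}^c}$.
\item $\beta_2$ holds in the same way, using $z+(1,0)\in z\cdot(1,0)$.
\end{itemize}

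The main obstacle is the bookkeeping caused by the "$\max$" lower bound in the definition of $\cdot$. An increment by $(1,0)$ or $(0,1)$ may leave the point unchanged. The first disjuncts $\mathtt{m_{i,j}}\lor\cdots$ inside $\mathtt{G_{i,j}}$ and $\mathtt{x'}$ are exactly what absorbs this case, so each conditional must be checked on both branches. One must also check that this case does not break $\beta_1$ and $\beta_2$; it does not, since there we only need the existence of the genuine successor point.
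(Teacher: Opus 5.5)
Your proposal is correct and follows essentially the same route as the paper. You use the same parity/tile valuation on $(\mathbb{N}^2,\cdot)$, pull it back along the p-morphism $f$, and check the conjuncts in the same way, including $(0,0)\cdot z=\{z\}$ and the role of the first disjuncts in absorbing increments that do not move the point. The only cosmetic difference is that you close the argument with $\varnothing\cup\varnothing=\varnothing$ in the semilattice frame, where the paper uses $(0,0)\cdot(0,0)\ni(0,0)$ in $(\mathbb{N}^2,\cdot)$.
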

\begin{proof}
    Then $\mathcal{W}$ tiles the quadrant; let $\tau:\mathbb{N}^2\to \mathcal{W}$ be such a tiling. By the preceding lemma, since $f(\varnothing)=(0,0)$, it suffices to define a valuation $V$ on $(\mathbb{N}^2, \cdot)$ such that $(\mathbb{N}^2, \cdot, V), (0,0)\nVdash \psi^{}_\mathcal{W}$. We define $V$ as follows.    
\begin{align*}
        V(\mathtt{m_{0,0}})&\mathrel{:=}\big\{(m,n)\mid m\text{ is even and }n \text{ is even} \big\},\\
        V(\mathtt{m_{0,1}})&\mathrel{:=}\big\{(m,n)\mid m \text{ is even and }n \text{ is odd}  \big\},\\
        V(\mathtt{m_{1,0}})&\mathrel{:=}\big\{(m,n)\mid m \text{ is odd and }n \text{ is even}  \big\},\\
        V(\mathtt{m_{1,1}})&\mathrel{:=}\big\{(m,n)\mid m \text{ is odd and }n \text{ is odd}  \big\},\\
        V(\mathtt{x})&\mathrel{:=}\big\{(1,0)\big\},\\
        V(\mathtt{y})&\mathrel{:=}\big\{(0,1) \big\},\\
        V(\mathtt{p_\top})&\mathrel{:=}\mathbb{N}^2,\\
        V(\mathtt{t})&\mathrel{:=}\{(m,n)\mid \tau(m,n)=t\}.
\end{align*}
 Since $(0,0)\cdot (0,0)\ni (0,0)$, to prove that $(\mathbb{N}^2, \cdot, V), (0,0)\nVdash \psi^{}_\mathcal{W}$, it is enough to verify that
\[
    (0,0)\Vdash \alpha \land\beta_1\land \beta_2\land  \gamma\land \mathtt{G_{0,0}} \qquad \text{and} \qquad (0,0)\nVdash \mathtt{x'}\to\mathtt{m_{1,0}^c}.
\] 
To this end, we first show that
\begin{align}
    (1,0)\Vdash \mathtt{x'} \qquad \text{and}\qquad (0,1)\Vdash \mathtt{y'}.\label{eq:evenandodd}
\end{align}
By definition, $(0,1)\Vdash \mathtt{y}$. Further, as all $X\in \mathcal{P}_{\mathrm{fin}}(\mathbb{N})$ satisfy $\mathtt{p_\top}$, we have $(0,1)\Vdash \mathtt{p_\top}\land (\mathtt{p_\top}\to \mathtt{p_\top})$, and so $(0,1)\Vdash \mathtt{y'}$. Likewise, $(1,0)\Vdash \mathtt{x} \land \mathtt{p_\top}\land (\mathtt{p_\top}\to \mathtt{p_\top})$. 

Next, observe that $V(\mathtt{m_{i,j}})$ and $V(\mathtt{m_{i',j'}})$ are disjoint for $(i,j)\neq (i', j')$, hence vacuously
\[
    (1,0)\Vdash \bigwedge_{(i,j)\neq (i',j')}\left[(\mathtt{m_{i,j}}\land \mathtt{m_{i',j'}})\to (\mathtt{m_{0,0}}\land \mathtt{m_{0,1}})\right].
\]
Similarly, since $\tau$ is a function, $V(\mathtt{t})$ and $V(\mathtt{t'})$ are disjoint for $t\neq t'$, and hence
\[
    (1,0)\Vdash \bigwedge_{t,t'\in \mathcal{W},t\neq t'}\left[(\mathtt{t}\land \mathtt{t}')\to (\mathtt{m_{0,0}}\land \mathtt{m_{0,1}})\right].
\]
Thus, to conclude $(1,0)\Vdash \mathtt{x'}$, we need only show that
\[
    (1,0)\Vdash \bigwedge_{t\in \mathcal{W}, i,j\in \{0,1\}}\left[(\mathtt{m_{i,j}}\land \mathtt{t})\to (\mathtt{m_{i,j}}\lor (\mathtt{m_{1-i,j}}\land \mathtt{R}(t)))\right].
\]
Accordingly, let $t\in\mathcal{W}$, $i,j\in \{0,1\}$, and $(m,n),(m',n')\in \mathbb{N}^2$ be arbitrary with
\[
    (m,n)\Vdash \mathtt{m_{i,j}}\land \mathtt{t}\qquad \text{and}\qquad (1,0)\cdot (m,n)\ni(m',n').
\]
Then, by definition of $\cdot$,
\[
    \max\{1,m\}\leq m'\leq 1+m\qquad \text{and} \qquad \max\{0,n\}\leq n'\leq 0+n
\]
Hence $n=n'$, and either $m=m'$ or $m+1=m'$. If $m=m'$, then $(m',n')=(m,n)\Vdash \mathtt{m_{i,j}}$, so $(m',n')$ satisfies the first disjunct of the consequent. If $m+1=m'$, then $(m',n')=(m+1,n)$, and $(m,n)\Vdash \mathtt{m_{i,j}}$ entails $(m',n')\Vdash\mathtt{m_{1-i,j}}$. Moreover, since $\tau$ satisfies the tiling conditions, $(m,n)\Vdash \mathtt{t}$ implies $(m',n')\Vdash\mathtt{R}(t)$, so $(m',n')$ satisfies the second disjunct. Thus we have proven~\eqref{eq:evenandodd}. 

Using this, we obtain $(0,0)\nVdash \mathtt{x'}\to\mathtt{m_{1,0}^c}$, since $(1,0)\Vdash \mathtt{x'}$ and $(0,0)\cdot (1,0)\ni(1,0)\nVdash \mathtt{m_{1,0}^c}$.

Further, $(0,0) \Vdash \alpha$ because $(0,0)\cdot (m,n)\ni (m',n')$ only if $(m,n)=(m', n')$. 

By this observation, to show $(0,0)\Vdash \beta_1$, it suffices to show that for every $(m,n)\in \mathbb{N}^2$ and $i\in\{0,1\}$: if $(m,n)\nVdash \mathtt{m_{i,1-i}^c}$, then $(m,n)\nVdash \mathtt{y'}\to \mathtt{m_{i,i}^c}$. So suppose $(m,n)\nVdash \mathtt{m_{i,1-i}^c}$. Then $(m,n)\Vdash \mathtt{m_{i,1-i}}$, hence $(m,n+1)\nVdash \mathtt{m_{i,i}^c}$. But $(0,1)\Vdash \mathtt{y'}$, so $(m,n)\cdot (0,1) \ni (m,n+1)$ witnesses that $(m,n)\nVdash \mathtt{y'}\to \mathtt{m_{i,i}^c}$. Thus $(0,0)\Vdash \beta_1$. A similar argument shows $(0,0)\Vdash \beta_2$.

Lastly, to prove that $(0,0) \Vdash \gamma\land \mathtt{G_{0,0}}$, we show that for any $(m,n)\in\mathbb{N}^2$: if $(m,n)\Vdash \mathtt{m_{i,j}}$, then $(m,n)\Vdash \mathtt{G_{i,j}}$. This suffices since the sets $V(\mathtt{m_{0,0}}),$ $V(\mathtt{m_{0,1}}),$ $V(\mathtt{m_{1,0}}),$ $V(\mathtt{m_{1,1}})$ partition $\mathbb{N}^2$, and $(0,0)\in V(\mathtt{m_{0,0}})$. 

So suppose $(m,n)\Vdash \mathtt{m_{i,j}}$ for some $(m,n)\in\mathbb{N}^2$ and $i,j\in\{0,1\}$. Take $t\in \mathcal{W}$ with $\tau(m,n)=t$. Then $(m,n)\Vdash \mathtt{t}$. It is therefore enough to show that
    \[
        (m,n)\Vdash \mathtt{y}\to (\mathtt{m_{i,j}}\lor (\mathtt{m_{i,1-j}}\land \mathtt{U}(t))).
    \]
Since $(m,n)\cdot (0,1) \ni (m', n')$ only if $(m', n')=(m, n)$ or $(m', n')=(m, n+1)$, there are two cases. If $(m', n')=(m, n)$, then $(m', n')\Vdash \mathtt{m_{i,j}}$. If $(m',n')=(m,n+1)$, then $(m,n)\Vdash \mathtt{m_{i,j}}$ implies $(m,n+1)\Vdash\mathtt{m_{i,1-j}}$, and $(m,n)\Vdash \mathtt{t}$ implies $(m,n+1)\Vdash\mathtt{U}(t)$. Thus, in either case, 
\[
    (m',n')\Vdash \mathtt{m_{i,j}}\lor (\mathtt{m_{i,j-1}}\land \mathtt{U}(t)).
\]
This completes the proof of $(0,0) \Vdash \gamma\land \mathtt{G_{0,0}}$, and hence of the lemma.
\end{proof}

\section{Results}\label{sec:results} The tiling lemmas together imply our main theorem.
\begin{Theorem}\label{th:main}
    For any set of formulas $\mathsf{L}\subseteq \mathcal{L}$, if 
    \[
        \mathsf{TWJ}^+\subseteq \mathsf{L}\subseteq \mathrm{Log}(\mathcal{P}_{\mathrm{fin}}(\mathbb{N}), \cup, \varnothing),
    \]
    then $\mathsf{L}$ is undecidable.
\end{Theorem}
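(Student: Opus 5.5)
The plan is a many-one reduction from the complement of the $\mathbb{Z}^2$ tiling problem, assembled from the two tiling lemmas and Lemma~\ref{lm:octanttoplane}. Fix $\mathsf{L}$ with $\mathsf{TWJ}^+\subseteq \mathsf{L}\subseteq \mathrm{Log}(\mathcal{P}_{\mathrm{fin}}(\mathbb{N}), \cup, \varnothing)$ and a finite tile set $\mathcal{W}$. I will show that
\[
    \psi^{}_{\mathcal{W}}\notin \mathsf{L}\qquad\text{iff}\qquad \mathcal{W}\text{ tiles }\mathbb{Z}^2 .
\]

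For the left-to-right direction, suppose $\psi^{}_{\mathcal{W}}\notin\mathsf{L}$. Since $\mathsf{TWJ}^+\subseteq\mathsf{L}$, we get $\mathsf{TWJ}^+\nvdash\psi^{}_{\mathcal{W}}$. Lemma~\ref{lm:tiling} then gives that $\mathcal{W}$ tiles $\mathbb{O}(k)$ for every $k$. The implication (4)$\Rightarrow$(1) of Lemma~\ref{lm:octanttoplane} upgrades this to a tiling of $\mathbb{Z}^2$.

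For the right-to-left direction, suppose $\mathcal{W}$ tiles $\mathbb{Z}^2$. Lemma~\ref{lm:tilingpowerset} gives $(\mathcal{P}_{\mathrm{fin}}(\mathbb{N}), \cup, \varnothing)\nvDash\psi^{}_{\mathcal{W}}$, so $\psi^{}_{\mathcal{W}}\notin \mathrm{Log}(\mathcal{P}_{\mathrm{fin}}(\mathbb{N}), \cup, \varnothing)\supseteq\mathsf{L}$.

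To finish, I will observe that the map $\mathcal{W}\mapsto\psi^{}_{\mathcal{W}}$ is computable: the formula is built by a finite, explicit syntactic recipe from $\mathcal{W}$, namely the disjunctions $\mathtt{R}(t)$ and $\mathtt{U}(t)$ and finitely many conjuncts. Suppose $\mathsf{L}$ were decidable. Composing a decision procedure for membership in $\mathsf{L}$ with this map and negating the answer would decide whether $\mathcal{W}$ tiles $\mathbb{Z}^2$. That contradicts Berger's theorem, so $\mathsf{L}$ is undecidable.

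I do not expect any step to be a real obstacle, since all the substantive work is in the two tiling lemmas. The only points needing care are bookkeeping. First, the reduction must pass through the octant characterisation, because Lemma~\ref{lm:tiling} yields only finite octant tilings. Second, Lemma~\ref{lm:tiling} is stated in terms of derivability, whereas $\mathsf{L}$ is an arbitrary set of formulas; this is handled purely by the inclusions $\mathsf{TWJ}^+\subseteq\mathsf{L}\subseteq\mathrm{Log}(\mathcal{P}_{\mathrm{fin}}(\mathbb{N}), \cup, \varnothing)$ and needs no closure properties of $\mathsf{L}$.
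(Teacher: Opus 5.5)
Your proposal is correct and follows the paper's proof essentially step for step. You establish $\psi^{}_{\mathcal{W}}\notin\mathsf{L}$ iff $\mathcal{W}$ tiles $\mathbb{Z}^2$ using Lemma~\ref{lm:tiling} with Lemma~\ref{lm:octanttoplane} in one direction and Lemma~\ref{lm:tilingpowerset} in the other, and then reduce Berger's tiling problem via the computable map $\mathcal{W}\mapsto\psi^{}_{\mathcal{W}}$.
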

\begin{proof}
    Let $\mathcal{W}$ be a finite tile set. We show that 
    \[
    \psi^{}_{\mathcal{W}}\notin \mathsf{L}\qquad \text{iff}\qquad \text{$\mathcal{W}$ tiles $\mathbb{Z}^2$.}
    \]
    Suppose $\psi^{}_{\mathcal{W}}\notin \mathsf{L}$. Since $\mathsf{TWJ}^+ \subseteq \mathsf{L}$, it follows that $\psi_{\mathcal{W}} \notin \mathsf{TWJ}^+$. By Lemma~\ref{lm:tiling}, $\mathcal{W}$ tiles $\mathbb{O}(k)$ for all $k\in \mathbb{N}$, and hence $\mathcal{W}$ tiles $\mathbb{Z}^2$. 
    
    Conversely, if $\mathcal{W}$ tiles $\mathbb{Z}^2$, then by Lemma~\ref{lm:tilingpowerset}, $\psi^{}_{\mathcal{W}}\notin \mathrm{Log}(\mathcal{P}_{\mathrm{fin}}(\mathbb{N}), \cup, \varnothing)$, so $\psi^{}_{\mathcal{W}}\notin \mathsf{L}$.

    Thus the tiling problem reduces to the decision problem for $\mathsf{L}$.
\end{proof}
We note that in Theorem~\ref{th:main}, $\mathrm{Log}(\mathcal{P}_{\mathrm{fin}}(\mathbb{N}), \cup, \varnothing)$ can be replaced by $\mathrm{Log}(\mathbb{N}^2, \cdot, (0,0))$. The former formulation, however, makes certain consequences more immediate. In particular, since $(\mathcal{P}_{\mathrm{fin}}(\mathbb{N}), \cup, \varnothing)$ is a semilattice, we obtain the following:

\begin{Theorem}
    $\mathsf{S}$ is undecidable.
\end{Theorem}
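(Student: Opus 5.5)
The plan is to obtain this as a direct instance of Theorem~\ref{th:main}, taking $\mathsf{L}:=\mathsf{S}$. Two inclusions need checking:
\[
\mathsf{TWJ}^+\subseteq \mathsf{S}\subseteq \mathrm{Log}(\mathcal{P}_{\mathrm{fin}}(\mathbb{N}), \cup, \varnothing).
\]

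The upper inclusion holds by definition. $\mathsf{S}$ is the set of formulas valid on every semilattice frame, and Example~\ref{ex:finitepower} records that $(\mathcal{P}_{\mathrm{fin}}(\mathbb{N}), \cup, \varnothing)$ is a semilattice frame. Its validity notion (truth at $0=\varnothing$ under every valuation) is exactly the one used to define $\mathrm{Log}(\mathcal{P}_{\mathrm{fin}}(\mathbb{N}), \cup, \varnothing)$. This reading takes $N=\{\varnothing\}$ and arbitrary valuations, which are upsets because $\leq$ is equality.

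For the lower inclusion I would use the Remark following the definition of semilattice frames. There, a semilattice model is viewed as a Routley--Meyer model with $x\cdot y=\{x\sqcup y\}$ and $N=\{0\}$, and this structure satisfies the correspondents in Proposition~\ref{pr:correspondence}. The first check is that it is a frame in the sense of Definition~\ref{def:basicframes}. Here $x\leq y$ iff $0\sqcup x=y$ iff $x=y$, so reflexivity, transitivity, upset and down-down-up are immediate. The second check is the three conditions (h), (p), (s), each a one-line semilattice identity:
\begin{itemize}
\item[(h)] $x\sqcup y = x\sqcup(x\sqcup y)$, by idempotence and associativity;
\item[(p)] $(x\sqcup y)\sqcup z=x\sqcup(y\sqcup z)$, by associativity;
\item[(s)] $(x\sqcup y)\sqcup z = y\sqcup(x\sqcup z)$, by commutativity and associativity.
\end{itemize}
By soundness of $\mathsf{TWJ}^+$ with respect to (h)-(p)-(s) frames, every theorem of $\mathsf{TWJ}^+$ is true at every normal point, hence at $0$, in every semilattice model. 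This gives $\mathsf{TWJ}^+\subseteq\mathsf{S}$.

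With both inclusions established, Theorem~\ref{th:main} yields that $\mathsf{S}$ is undecidable. There is no real obstacle. The only point requiring care is that the Routley--Meyer validity notion (truth at all normal points) agrees with semilattice validity (truth at $0$); this holds because $N=\{0\}$.
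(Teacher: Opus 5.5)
Your proposal is correct and follows the paper's route: the paper also obtains the theorem as an immediate instance of Theorem~\ref{th:main}. It uses that $(\mathcal{P}_{\mathrm{fin}}(\mathbb{N}),\cup,\varnothing)$ is a semilattice frame for $\mathsf{S}\subseteq\mathrm{Log}(\mathcal{P}_{\mathrm{fin}}(\mathbb{N}),\cup,\varnothing)$, and the earlier Remark that semilattice frames meet the correspondence conditions for $\mathsf{TWJ}^+\subseteq\mathsf{S}$; your explicit checks of (h), (p), (s), of the frame conditions, and of the agreement between the two validity notions (via $N=\{0\}$) simply spell out what the paper leaves implicit.
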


As a corollary, it follows that $\mathsf{S}$ lacks the finite model property, answering a question recently highlighted in~\cite{Weiss21}.

\begin{Corollary}\label{th:noFMP}
    $\mathsf{S}$ does not have the finite model property.
\end{Corollary}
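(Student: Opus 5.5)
The plan is the standard argument that a recursively axiomatizable logic with the finite model property is decidable, combined with the main undecidability theorem. I take the finite model property for $\mathsf{S}$ to mean: every $\varphi\notin\mathsf{S}$ is refuted on some \emph{finite} semilattice frame. Suppose, towards a contradiction, that $\mathsf{S}$ has this property.

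First, non-theorems of $\mathsf{S}$ are recursively enumerable. One enumerates all pairs consisting of a finite semilattice frame $(S,\sqcup,0)$ (a finite table, with commutativity, associativity, idempotence and identity checked mechanically) and a valuation on the finitely many letters of a formula $\varphi$. For each pair one evaluates $\varphi$ at $0$ by recursion on subformulas, using the clause for $\to$: $x\Vdash\varphi\to\psi$ iff for every $y$ with $y\Vdash\varphi$ we have $x\sqcup y\Vdash\psi$. This is a finite computation because $S$ is finite, so the set of $\varphi$ refuted on some finite semilattice frame is r.e. Under the finite model property, this set is exactly the complement of $\mathsf{S}$.

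Second, $\mathsf{S}$ itself is recursively enumerable; the introduction recalls this via the Fine--Charlwood axiomatization. Since both $\mathsf{S}$ and its complement in $\mathcal{L}$ are r.e., Post's theorem makes $\mathsf{S}$ decidable. This contradicts the preceding theorem that $\mathsf{S}$ is undecidable, so $\mathsf{S}$ lacks the finite model property.

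The only delicate point is r.e.-ness of $\mathsf{S}$. To avoid relying on the cited axiomatization, one can drop it and use the reduction directly: $\psi_{\mathcal{W}}\notin\mathsf{S}$ iff $\mathcal{W}$ tiles $\mathbb{Z}^2$, by Lemma~\ref{lm:tiling} and Lemma~\ref{lm:tilingpowerset}. Non-tiling is r.e., since by Lemma~\ref{lm:octanttoplane} and compactness it is witnessed by some $k$ with no tiling of $\mathbb{O}(k)$. Under the finite model property, tiling would also be r.e., as it is equivalent to $\psi_{\mathcal{W}}$ having a finite semilattice countermodel. Tiling would then be decidable, contradicting Berger's theorem. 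This version needs no axiomatization of $\mathsf{S}$ at all, so I would present it as the main proof.
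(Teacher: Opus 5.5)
Your first argument is the paper's proof: $\mathsf{S}$ is r.e., finite semilattice countermodels can be effectively enumerated, so the finite model property would make $\mathsf{S}$ decidable, contradicting undecidability.

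Your preferred second argument is also correct, and it takes a genuinely different route. It replaces the recursive enumerability of $\mathsf{S}$ by that of non-tiling (via Lemma~\ref{lm:octanttoplane}), and it reduces from Berger's theorem directly rather than from the undecidability of $\mathsf{S}$. One step should be made explicit. Passing from a finite semilattice countermodel of $\psi^{}_{\mathcal{W}}$ to a tiling uses $\mathsf{TWJ}^+\subseteq\mathsf{S}$ (semilattice frames satisfy (h), (p), (s)) before Lemma~\ref{lm:tiling} applies.

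The paper's version buys brevity and uniformity. It is the generic argument that an r.e.\ logic with the finite model property over an r.e.\ class of finite frames is decidable. It therefore transfers verbatim to $\mathsf{TWJ}^+,\dots,\mathsf{SetFr}$, as remarked at the end of \S\ref{sec:results}.

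Your version buys independence from the cited, nontrivial fact that $\mathsf{S}$ is recursively enumerable (the Fine--Charlwood axiomatization), and it also yields more information. It shows the failure is already witnessed by some tiling formula $\psi^{}_{\mathcal{W}}$ with $\mathcal{W}$ tiling the plane. Moreover, all it uses about the finite countermodels is that they are (h)-(p)-(s) frames, which is a decidable property of finite frames. So it shows that no logic in the interval $[\mathsf{TWJ}^+,\mathrm{Log}(\mathcal{P}_{\mathrm{fin}}(\mathbb{N}),\cup,\varnothing)]$ has the finite model property with respect to finite frames, without needing to know that the logic is recursively enumerable.
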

\begin{proof}
    $\mathsf{S}$ is recursively enumerable, and so is the class of finite semilattices. Therefore, if $\mathsf{S}$ had the finite model property, it would be co-recursively enumerable, and hence decidable.\footnote{For a witnessing formula, see the earlier conference version of this paper~\cite{Knudstorp2024}. There it is shown that the formula $\psi_{\infty}$ is refuted by an infinite semilattice frame, but by no finite semilattice frame, where $\psi_{\infty}\mathrel{:=}(o\land \psi_1\land \psi_2\land \psi_3)\to e,$ for $e,o$ propositional letters and $\psi_1\mathrel{:=}[e\land (e\to e)] \to o,\psi_2\mathrel{:=}[o\land (o\to o)]\to e,\psi_3\mathrel{:=} (e\lor o)\to [(e\lor o)\land (e\lor o\to e\lor o)].$}
\end{proof}
Theorem~\ref{th:main} further furnishes a new, tiling-based proof of the undecidability of $\mathsf{T}, \mathsf{E}, \mathsf{R}$ and of their $\{\to,\land, \lor\}$-fragments, and likewise for $\mathsf{TWJ},\mathsf{C}, \mathsf{SetFr}$.
\begin{Theorem}
    $\mathsf{TWJ}^+, \mathsf{C}^+,  \mathsf{T}^+, \mathsf{E}^+, \mathsf{R}^+, \mathsf{SetFr}$ are all undecidable.
\end{Theorem}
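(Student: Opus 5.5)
This is a direct application of Theorem~\ref{th:main}. The plan is to check, for each listed logic $\mathsf{L}$, the sandwich condition
\[
\mathsf{TWJ}^+\subseteq \mathsf{L}\subseteq \mathrm{Log}(\mathcal{P}_{\mathrm{fin}}(\mathbb{N}), \cup, \varnothing),
\]
and then invoke Theorem~\ref{th:main}.

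\textbf{Lower bound.} By definition, $\mathsf{C}^+$, $\mathsf{T}^+$, $\mathsf{E}^+$ and $\mathsf{R}^+$ arise from $\mathsf{TWJ}^+$ by cumulatively adding axiom schemes and closing under modus ponens and adjunction. Hence
\[
\mathsf{TWJ}^+\subseteq\mathsf{C}^+\subseteq\mathsf{T}^+\subseteq\mathsf{E}^+\subseteq\mathsf{R}^+.
\]
For $\mathsf{SetFr}$, every Routley--Meyer set frame is in particular an $\mathsf{R}^+$-frame, so soundness of $\mathsf{R}^+$ for its frames gives $\mathsf{R}^+\subseteq\mathsf{SetFr}$.

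\textbf{Upper bound.} I would first show $\mathsf{SetFr}\subseteq\mathsf{S}$, which is noted in the Remark following the definition of semilattice frames. Viewed as a Routley--Meyer frame with $N=\{0\}$, every semilattice frame validates the correspondents of Proposition~\ref{pr:correspondence}, together with $x\sqcup x=x=0\sqcup x$, which gives $x\cdot x\subseteq N\cdot x$. So every semilattice frame is a set frame.

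One point needs care here. Validity in a semilattice frame quantifies over arbitrary valuations $V:\mathsf{Prop}\to\mathcal{P}(S)$, while the Routley--Meyer definition requires upset valuations. These agree, because in a semilattice frame $x\leq y$ iff $0\sqcup x=y$, i.e.\ $x=y$, so every subset is an upset. Next, $\mathsf{S}\subseteq \mathrm{Log}(\mathcal{P}_{\mathrm{fin}}(\mathbb{N}), \cup, \varnothing)$ holds because this structure is a semilattice frame (Example~\ref{ex:finitepower}). Chaining the inclusions places every listed logic in the required interval.

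\textbf{Conclusion.} Theorem~\ref{th:main} then yields undecidability of each logic. Equivalently, the map $\mathcal{W}\mapsto\psi^{}_{\mathcal{W}}$ is a computable reduction of the $\mathbb{Z}^2$ tiling problem to the complement of membership in $\mathsf{L}$, and the former is undecidable by Berger's theorem.

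There is no real obstacle. The only step that needs attention is the inclusion $\mathsf{R}^+\subseteq\mathsf{SetFr}\subseteq\mathsf{S}$, specifically checking that semilattice validity matches Routley--Meyer validity at normal points. This reduces to the observations above that $N=\{0\}$ and that $\leq$ is the identity relation.
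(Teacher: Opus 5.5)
Your proposal is correct and follows the paper's own (implicit) argument: each listed logic sits in the interval $[\mathsf{TWJ}^+, \mathrm{Log}(\mathcal{P}_{\mathrm{fin}}(\mathbb{N}), \cup, \varnothing)]$ via the chain $\mathsf{TWJ}^+\subseteq\mathsf{C}^+\subseteq\mathsf{T}^+\subseteq\mathsf{E}^+\subseteq\mathsf{R}^+\subseteq\mathsf{SetFr}\subseteq\mathsf{S}$, and Theorem~\ref{th:main} does the rest. Your additional check that semilattice validity coincides with Routley--Meyer validity usefully fills in the step the paper leaves as ``readily verified'': this holds because $N=\{0\}$ and $\leq$ is the identity, so every valuation is an upset valuation.
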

The logics $\mathsf{TWJ}^+, \mathsf{C}^+,  \mathsf{T}^+, \mathsf{E}^+, \mathsf{R}^+, \mathsf{SetFr}$ are, like $\mathsf{S}$, recursively enumerable, and so are their respective classes of finite frames. Hence Theorem~\ref{th:main} also entails that none of these logics has the finite model property.

Beyond the relevant-logical setting, Theorem~\ref{th:main} can also be used to derive new undecidability results within other frameworks through translation. Some initial steps in this direction are taken in~\cite{KnudstorpPosRel}.
\\\\
We end with a different kind of corollary, one that is seldom stated but significant and therefore, I believe, worth the mention.

Within axiomatic theories such as $\mathsf{ZFC}$, we can formalize statements of the form `$\mathsf{TWJ}^+\vdash \varphi$' and `$\mathsf{TWJ}^+\nvdash \varphi$'. We may therefore ask, for a given $\varphi\in\mathcal{L}$, whether the derivability of $\varphi$ in $\mathsf{TWJ}^+$ is settled by $\mathsf{ZFC}$. 
When presented with the axiomatization of $\mathsf{TWJ}^+$ in~\S\ref{subsec:syntax}, one might, at least initially, expect this question to be trivial: surely $\mathsf{ZFC}$, powerful enough to formalize modern mathematics, should settle whether a formula $\varphi\in\mathcal{L}$ is derivable from the simple axioms and rules of $\mathsf{TWJ}^+$. This expectation, however, is far too optimistic.


%
\begin{Corollary}
     Assume that $\mathsf{ZFC}$ is sound for statements of the form `$\mathsf{TWJ}^+\vdash \varphi$', i.e. that
     \[
        \mathsf{ZFC}\vdash \text{`$\mathsf{TWJ}^+\vdash \varphi$'}\qquad \text{implies} \qquad \mathsf{TWJ}^+\vdash \varphi.
     \]
     Then there are infinitely many $\varphi\in \mathcal{L}$ for which the statement `$\mathsf{TWJ}^+\vdash \varphi$' is independent of $\mathsf{ZFC}$.
\end{Corollary}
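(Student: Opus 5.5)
The plan is the standard recursion-theoretic argument that a sound, recursively axiomatized theory cannot decide an undecidable problem. I will combine Theorem~\ref{th:main} with the recursive enumerability of $\mathsf{ZFC}$-theorems.

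First, I will argue that there is at least one $\varphi$ for which `$\mathsf{TWJ}^+\vdash \varphi$' is independent of $\mathsf{ZFC}$. Suppose not, so that for every $\varphi\in\mathcal{L}$, $\mathsf{ZFC}$ proves either `$\mathsf{TWJ}^+\vdash\varphi$' or `$\mathsf{TWJ}^+\nvdash\varphi$'. Since the theorems of $\mathsf{ZFC}$ are recursively enumerable, a procedure that enumerates $\mathsf{ZFC}$-proofs until one of the two statements appears halts on every input. I then need this procedure to answer correctly.

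\begin{itemize}
\item If it finds `$\mathsf{TWJ}^+\vdash\varphi$', soundness as assumed gives $\mathsf{TWJ}^+\vdash\varphi$.
\item If it finds `$\mathsf{TWJ}^+\nvdash\varphi$', the answer must also be correct. Otherwise $\mathsf{TWJ}^+\vdash\varphi$ actually holds, witnessed by a concrete finite derivation. $\mathsf{ZFC}$ verifies concrete derivations, so it proves the true $\Sigma_1$ statement `$\mathsf{TWJ}^+\vdash\varphi$'. Together with `$\mathsf{TWJ}^+\nvdash\varphi$', $\mathsf{ZFC}$ would then be inconsistent, hence prove every statement of the form `$\mathsf{TWJ}^+\vdash\chi$'. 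By the soundness hypothesis every $\chi$ would then be derivable, which fails: for instance a propositional letter $p$ is not valid in $(\mathcal{P}_{\mathrm{fin}}(\mathbb{N}),\cup,\varnothing)$.
\end{itemize}

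So the procedure would decide $\mathsf{TWJ}^+$, contradicting Theorem~\ref{th:main}.

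To get infinitely many such $\varphi$, I will run the same argument relative to a finite set. Suppose only finitely many formulas $\varphi_1,\dots,\varphi_r$ had independent status. Modify the procedure by hard-coding the correct answers for these finitely many formulas, which is a finite table and hence computable; every other formula is handled by the proof search. This again yields a decision procedure for $\mathsf{TWJ}^+$, a contradiction.

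The main delicate step is the correctness of the `$\nvdash$' answers. It relies on $\Sigma_1$-completeness of $\mathsf{ZFC}$, which is standard; consistency of $\mathsf{ZFC}$ then follows from the soundness hypothesis as shown above.
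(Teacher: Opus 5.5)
Your proposal is correct and follows essentially the same argument as the paper. Both run a proof search in $\mathsf{ZFC}$, use soundness for the `$\vdash$' answers, and use formalization of concrete derivations plus consistency for the `$\nvdash$' answers. Both then hard-code finitely many exceptions; this corresponds to the paper's $2^{|\Phi|}$ machines. Your only addition is spelling out why soundness implies consistency, via the underivable letter $p$, which the paper states without detail.
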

\begin{proof}
    Since $\mathsf{ZFC}$ is recursively enumerable, we can define a Turing machine that, on input $\varphi$, searches for a $\mathsf{ZFC}$-proof of `$\mathsf{TWJ}^+\vdash \varphi$' or of its negation `$\mathsf{TWJ}^+\nvdash \varphi$'. 
    
    From this machine, for each finite set of formulas $\Phi\subseteq \mathcal{L}$, we can define $2^{|\Phi|}$ Turing machines that, outside $\Phi$, run the preceding Turing machine, but, within $\Phi$, hard-code the $2^{|\Phi|}$ possible YES/NO answer patterns. One of these is correct for inputs $\varphi\in \Phi$. It is also correct for inputs $\varphi\notin\Phi$ \textit{provided that} `$\mathsf{TWJ}^+\vdash \varphi$' is not independent of $\mathsf{ZFC}$. For if there is a $\mathsf{ZFC}$-proof of `$\mathsf{TWJ}^+\vdash \varphi$', then by soundness, $\mathsf{TWJ}^+\vdash \varphi$. And if there is a $\mathsf{ZFC}$-proof of `$\mathsf{TWJ}^+\nvdash \varphi$', then by consistency (implied by soundness), $\mathsf{TWJ}^+\nvdash \varphi$, since otherwise $\mathsf{ZFC}$ would prove `$\mathsf{TWJ}^+ \vdash \varphi$' by formalizing the corresponding finite derivation.

    Thus it follows from the undecidability of $\mathsf{TWJ}^+$ that there are infinitely many  $\varphi\in \mathcal{L}$ for which `$\mathsf{TWJ}^+\vdash \varphi$' is independent of $\mathsf{ZFC}$.
\end{proof}
Under the weaker assumption that $\mathsf{ZFC}$ is consistent, there are still infinitely many formulas $\varphi \in \mathcal{L}$ such that $\mathsf{TWJ}^+ \nvdash \varphi$ but $\mathsf{ZFC}$ does not prove `$\mathsf{TWJ}^+ \nvdash \varphi$' (only that now we can have $\mathsf{ZFC}\vdash \text{`$\mathsf{TWJ}^+\vdash \varphi$'}$). For if not, the complement $\mathcal{L}\setminus \mathsf{TWJ}^+$ would be recursively enumerable modulo a finite set, and hence recursively enumerable, contradicting the fact that $\mathsf{TWJ}^+$ is recursively enumerable and undecidable.

These arguments are not unique to $\mathsf{TWJ}^+$ or to $\mathsf{ZFC}$. We can swap $\mathsf{TWJ}^+$ for any of the undecidable logics considered above, and $\mathsf{ZFC}$ for any recursively enumerable theory extending Robinson arithmetic $\mathsf{Q}$, such as $\mathsf{ZFC}$ + some large cardinals. Undecidability implies independence galore. 

For an example of a specific formula that exhibits the above independence behaviour, 
let $T$ be a Turing machine that searches for a $\mathsf{ZFC}$-proof of contradiction and halts just in case such a proof is found. Let, in turn, $\mathcal{W}_T$ be a Wang tile set obtained following Berger~\cite{Berger} 
such that $T$ halts iff $\mathcal{W}_T$ does not tile the plane. Then the corresponding tiling formula $\psi^{}_{\mathcal{W}_T}$ exhibits the above independence behaviour, as we have
\begin{align*}
    \mathsf{TWJ}^+ \vdash \psi^{}_{\mathcal{W}_T}
    \quad\text{iff}\quad
    \mathcal{W}_T \text{ does not tile the plane}
    \quad\text{iff}\quad
    \mathsf{ZFC} \text{ is inconsistent}.
\end{align*}
The question of whether $\mathsf{TWJ}^+\vdash \psi^{}_{\mathcal{W}_T}$ is left as an exercise for the reader.
\\\\

\subsection*{Acknowledgements} I am grateful to Shawn Standefer for his thoughtful comments and constant encouragement.

\subsection*{Funding} The author was supported by the Nothing is Logical (NihiL) project (NWO OC 406.21.CTW.023).


\bibliographystyle{asl}
\bibliography{bibliography}

\end{document}